\newtheorem{theorem}{Theorem}[section]
\newtheorem{lemma}[theorem]{Lemma}
\newtheorem{definition}[theorem]{Definition}
\newtheorem{proposition}[theorem]{Proposition}
\newtheorem{example}[theorem]{Example}
\newtheorem{corollary}[theorem]{Corollary}
\newtheorem{remark}[theorem]{Remark}
\newcommand\mapsfrom{\mathrel{\reflectbox{\ensuremath{\mapsto}}}}
\def\<{\langle}
\def\>{\rangle}
\def\a{\alpha}
\def\b{\beta}
\def\B{\Box}
\def\c{\cdot}
\def\d{\delta}
\def\D{\Delta}
\def\e{\eta}
\def\lr{\longrightarrow}
\def\m{\mapsto}
\def\o{\otimes}
\def\r{\rho}
\def\ra{\rightarrow}
\def\si{\sigma}
\def\ti{\times}
\def\tr{\triangleright}
\def\tl{\triangleleft}
\def\v{\varepsilon}
\def\vp{\varphi}
\def\z{\zeta}
\date{}
\begin{document}
\renewcommand{\baselinestretch}{1.2}
\renewcommand{\arraystretch}{1.0}
\title{\bf Four-angle Hopf modules for Hom-Hopf algebras}
 \date{}
\author {{\bf Xiaoqian Liu $^{a}$  \quad  Dongdong Yan$^{b}$\footnote {Corresponding author:  Ydd150365@163.com}
 \quad Xuchen Deng $^{a}$ \quad Danhua Wang $^{a}$}\\
{\small a: School of Information Engineering, Nanjing Xiaozhuang University, Nanjing }\\
{\small  Jiangsu 211171, P. R. of China}\\
{\small b: School of Mathematics and Physics, Nanjing Institute of Technology, Nanjing, }\\
{\small  Jiangsu 211167, P. R. of China}
}

 \maketitle
\begin{center}
\begin{minipage}{12.cm}
\noindent{\bf Abstract.}
In this paper, we introduce the notion of a four-angle Hopf module for a Hom-Hopf algebra $(H,\beta)$ and show that the category $\!^{H}_{H}\mathfrak{M}^{H}_{H}$
 of four-angle Hopf modules  is a monoidal category with either a Hom-tensor product $\otimes_{H}$ or a Hom-cotensor product $\Box_{H}$ as a monoidal product.
 We study the category $\mathcal{YD}^{H}_{H}$ of Yetter-Drinfel'd modules with bijective structure map can be organized as a braided monoidal category, in which we use a new monoidal structure. Finally, We prove an equivalence between the monoidal category $(~\!^{H}_{H}\mathfrak{M}^{H}_{H},\otimes_{H})$ or $(~\!^{H}_{H}\mathfrak{M}^{H}_{H},\Box_{H})$ of four-angle Hopf modules, and the monoidal category $\mathcal{YD}^{H}_{H}$ of Yetter-Drinfel'd modules,
   and furthermore, we give a braiding structure of the monoidal categorys $(~\!^{H}_{H}\mathfrak{M}^{H}_{H},\otimes_{H})$ (and  $(~\!^{H}_{H}\mathfrak{M}^{H}_{H},\Box_{H})$).  
   \\

\noindent{\bf Keywords:} Hom-Hopf algebra;  Four-angle Hopf module; Yetter-Drinfel'd module; Braided monoidal category.
\\

 \noindent{\bf  Mathematics Subject Classification 2020:} 16D20, 16D90, 18M15
 \end{minipage}
 \end{center}
 \normalsize\vskip1cm

\section*{Introduction}
Hom-type structures play an important role in physics. In \cite{HLS06}, Hartwig et al. first introduced the Hom-Lie algebras to investigate the structures on some $q$-deformations of Witt and Virasoro algebras, in which the Jacobi identity is twisted by a endomorphism. In \cite{MS08}, Makhlouf and panaite gave the notation of Hom-associative algebra and extended usual functor between the categoried of Lie algebras and associative algebras to Hom-setting. In \cite{Y08}, Yau provided the construction of the free Hom-associative algebra and the enveloping algebra of a Hom-Lie algebra. Since then, Hom-analogues of various classical structures and results have been introduced and discussed by many authors (see \cite{AM10, BR23, CAB25, G10, S12}). All these generalizations coincide with the usual definitions when the structure map equals the identity.

The category of Yetter-Drinfel'd modules is one of the important categories of modules in the theory of Hopf algebras. The category is indeed a braided monoidal category under some suitable assumption. Via the braiding structures the notion of Yetter-Drinfel'd module plays an important role in the relations between knot theory and quantum group. A (right) Hopf module over Hopf algebra $H$ is a (right) $H$-module and a (right) $H$-comodule satisfying a compatibility condition which is very different from the one defining a right-right Yetter-Drinfel'd module. The definition of Hopf module is best understood by the fundamental theorem of Hopf modules (see \cite{S69}). In \cite{W89}, Woronowicz reinvented Hopf modules (and the fundamental theorem) to study differential calculi over quantum groups. In \cite{S94}, Schauenburg extended the fundamental theorem of Hopf modules to a monoidal equivalence between the category of two-sided two-cosided Hopf modules over $H$, and the category of Yetter-Drinfel'd module over $H$. Additionally, the equivalence is a monoidal one if we endow the category of Yetter-Drinfel'd structures with tensor product over $\Bbbk$, and the category of two-sided two-cosided Hopf modules with either the tensor product or the cotensor product (see \cite{D81}) over $H$. 
In \cite{ZW19}, Zhang and Wang proposed a new approach to braided monoidal categories by generalizing one of Schauenburg’s main results within the framework of Hopf quasigroups. Shortly thereafter, the dual result was established in the setting of Hopf coquasigroups (see \cite{GW20}), which is dual to \cite{ZW19}. In \cite{H25}, Han constructed Hopf bimodules and Yetter-Drinfeld modules over Hopf algebroids, as a generalization of the corresponding theory for Hopf algebras. In \cite{MP14}, Makhlouf and Panaite constructed the definition of Yetter-Drinfel'd module over Hom-bialgebra and show that the category of Yetter-Drinfel'd module is a quasi-braided pre-tensor category in two different ways.

A natural question to ask is whether the main theorem in \cite{S94} still holds in the setting of Hom-Hopf algebras.

This paper is organized as follows: In section 1, we recall the definitions and properties of Hom-type structures and braided monoidal category. Let $(H,\b)$ be a Hom-Hopf algebras. In section 2, we give the definition of four-angle Hopf module over $H$ and show that the category $\!^{H}_{H}\mathfrak{M}^{H}_{H}$ of four-angle Hopf modules over $H$ with bijective structure map has two structures of a monoidal category, the tensor structure of $(M,\z_{M}),(N,\z_{N})\in \!^{H}_{H}\mathfrak{M}^{H}_{H}$ being defined by $M \o_{H}N$ or $M \B_{H}N$. In section 3, we recall the notation of Yetter-Drinfel'd modules over $H$ and prove that the category $\mathcal{YD}^{H}_{H}$ is braided monoidal category, where the monoidal structure is redefined. In section 4, we first show that the equivalence of monoidal categories between the category of right-right Yetter-Drinfel'd modules over $H$ with bijective structure map, and the category of four-angle Hopf modules over $H$ with bijective structure map, with either $\o_{H}$ or $\B_{H}$ as product structure, which generalizes the main theorem in \cite{S94}. Finally, we define the braiding structure of the monoidal categorys $(~\!^{H}_{H}\mathfrak{M}^{H}_{H},\o_{H})$ (and $(~\!^{H}_{H}\mathfrak{M}^{H}_{H},\B_{H})$). 
\section{preliminaries}
\def\theequation{1.\arabic{equation}}
\setcounter{equation} {0}
A \emph{monoidal category} $\mathcal{C}=(\mathcal{C},\o ,\mathcal{I},a,l,r)$ is a category $\mathcal{C}$ equipped with a tensor product functor $\o:\mathcal{C}\ti \mathcal{C}\lr \mathcal{C}$, with a tensor unit object $\mathcal{I}\in \mathcal{C}$, with an associativity constraint isomorphism $a=a_{U,V,W}:(U\o V)\o W\lr U\o (V\o W)$ for any objects $U,V,W\in \mathcal{C}$, a left unit constraint $l=l_{U}:\mathcal{I}\o U\lr U$ and a right unit constraint $r=r_{U}:U\o\mathcal{I}\lr U$, for any object $U\in \mathcal{C}$, such that the pentagon axiom $a_{U,V,W\o X}\circ a_{U\o V,W,X}=(U\o a_{ V,W,X})\circ a_{U, V\o W,X}\circ (a_{U, V,W}\o X)$ and the triangle axiom $(U\o l_{V})\circ a_{U,\mathcal{I},V}=(r_{U}\o V)$ hold, for any objects $U,V,W,X\in \mathcal{C}$. A monoidal category $\mathcal{C}$ is \emph{strict} if all the constraints are identities.
\begin{definition} (\cite{K95}).
A braiding of a monoidal category $\mathcal{C}$ is a family of natural isomorphisms $c=c_{V,W}:V\o W\lr W\o V$ such that the following conditions hold
\begin{equation}\label{e1.1}
c_{U,V\o W}=a^{-1}_{V, W,U}\circ (V\o c_{U,W})\circ a_{V,U,W}\circ (c_{U,V}\o W)\circ a^{-1}_{U,V,W},
\end{equation}
\begin{equation}\label{e1.2}
c_{U\o V,W}=a_{ W,U,V}\circ (c_{U, W}\o V)\circ a^{-1}_{U,W,V}\circ (U\o c_{V,W})\circ a_{U,V,W},
\end{equation}
for any $U,V,W\in\mathcal{C}$, where $a$ is the associativity constraint in the monoidal category $\mathcal{C}$.
\end{definition}

Note that a \emph{braided monoidal category} is a monoidal category $\mathcal{C}$ with a braiding.

In the following, we will introduce the definitions and properties about Hom-type structures.
\begin{definition} (\cite{MP15}).
A Hom-algebra is a quadruple $(A, m,\eta,\z_{A})$, in which $A$ is a linear space, $\z_{A}:A\lr A$, $m:A\o A\lr A$ and $\eta:\Bbbk\lr A$ are linear maps, with notations $m(a\o a')=aa'$ and $\eta(1_{\Bbbk})=1_{A}$, such that, for any $a,b,c\in A$:
\begin{align*}
\begin{cases}
&\z_{A}(ab)=\z_{A}(a)\z_{A}(b),\quad \z_{A}(a)(bc)=(ab)\z_{A}(c),
\\&\z_{A}(1_{A})=1_{A},\quad1_{A}a=a1_{A}=\z_{A}(a).
\end{cases}
\end{align*}
We call $\z_{A}$ the structure map of $A$.
\end{definition}

A \emph{morphism of Hom-algebras} is a linear map $f:A\lr A'$ such that $\z_{A'}\circ f=f\circ \z_{A}$, $f(1_{A})=1_{A'}$ and $f(ab)=f(a)f(b)$, for any $a,b\in A$.
\begin{definition}\label{D1.3} (\cite{Y08A, Y09}).
Let $(A,\z_{A})$ be a Hom-algebra. A left $A$-module is a triple $(M, \z_{M},\a)$, in which $M$ is a linear space, $\z_{M}:M\lr M$ and $\a:A\o M\lr M$ are linear maps, with notation $\a(a\o m)=a\c m$, such that, for any $a\in A$ and $m\in M$:
\begin{align}
&\z_{A}(a)\c(b\c m)=(ab)\c\z_{M}(m),\label{e1.1.1}
\\&\z_{M}(a\c m)=\z_{A}(a)\c\z_{M}(m),\quad \label{e1.1.2}
\\&1_{A}\c m=\z_{M}(m).\label{e1.1.3}
\end{align}
\end{definition}

Similarly, we can define the right $A$-module. A \emph{morphism of left $A$-modules} is a linear map $f:M\lr N$ such that $\z_{N}\circ f=f\circ \z_{M}$ and $f(a\c m)=a\c f(m)$, for any $a\in A$ and $m\in M$.
\begin{definition} (\cite{MP15}).
A Hom-coalgebra is a quadruple $(C, \D,\v,\z_{C})$, in which $C$ is a linear space, $\z_{C}:C\lr C$, $\D:C\lr C\o C$ and $\v:C\lr \Bbbk$ are linear maps, with notation $\D(c)=c_{1}\o c_{2}$, such that, for any $c\in C$:
\begin{align*}
\begin{cases}
&\D(\z_{C}(c))=\z_{C}(c_{1})\o\z_{C}(c_{2}),\quad \z_{C}(c_{1})\o \D(c_{2})=\D(c_{1})\o \z_{C}(c_{2}),
\\&\v\circ \z_{C}=\v,\quad\v(c_{1})c_{2}=\z_{C}(c)=c_{1}\v(c_{2}).
\end{cases}
\end{align*}
\end{definition}

A \emph{morphism of Hom-coalgebras} is a linear map $f:C\lr C'$ such that $\z_{C'}\circ f=f\circ \z_{C}$, $\v_{C'}\circ f=\v_{C}$ and $\D_{C'}\circ f=(f\o f)\circ \D_{C}$.
\begin{definition} (\cite{MS08A, Y10}).
Let $(C, \z_{C})$ be a Hom-coalgebra. A left $C$-comodule is a triple $(M, \z_{M},\r,)$, in which $M$ is a linear space, $\z_{M}:M\lr M$ and $\r:M\lr C\o M$ are linear maps, with notation $\r(m)=m_{[-1]}\o m_{[0]}$, such that, for any $c\in C$ and $m\in M$:
\begin{align*}
\begin{cases}
&\r(\z_{M}(m))=\z_{M}(m_{[-1]})\o\z_{C}(m_{[0]}),\quad \v(m_{[-1]})m_{[0]}=\z_{M}(m),
\\&\z_{M}(m_{[-1]})\o [m_{[0][-1]}\o m_{[0][0]}]=[m_{[-1]1}\o m_{[-1]2}]\o \z_{C}(m_{[0]}).
\end{cases}
\end{align*}
\end{definition}

Similarly, we can define the right $C$-comodule. A morphism of left $C$-comodule is a linear map $f:M\lr N$ such that $\z_{N}\circ f=f\circ \z_{M}$ and $\r^{N}\circ f=(C\o f)\circ \r^{M}$.
\begin{definition} (\cite{MS08A, Y10}).
A Hom-bialgebra is a sextuple $(H,m,\eta, \D,\v,\b)$, in which $(H,m,\eta,\b)$ is a Hom-algebra and $(H,\D,\v,\b)$ is a Hom-coalgebra such that, for any $h,h'\in H$:
\begin{align*}
\begin{cases}
&\D(hh')=h_{1}h'_{1}\o h_{2}h'_{2},\quad\D(1_{H})=1_{H}\o 1_{H},
\\&\v(hh')=\v(h)\v(h'),\quad \v(1_{H})=1_{\Bbbk}.
\end{cases}
\end{align*}
\end{definition}
\begin{definition}(\cite{MS08A}).
A Hom-Hopf algebra is a Hom-bialgebra $(H,\b)$ endowed with a linear map $S:H\lr H$, called the antipode, such that, for any $h\in H$:
\begin{align*}
\begin{cases}
S(h_{1})h_{2}=\v(h)1_{H}=h_{1}S(h_{2}),
\\ S\circ \b=\b\circ S,
\end{cases}
\end{align*}
\end{definition}
\begin{remark}
As the consequences of the axiom above. For any $h,g\in H$, we have
\begin{align*}
\begin{cases}
&S(hg)=S(g)S(h),\quad S(1_{H})=1_{H},
\\& \D(S(h))=S(h_{2})\o S(h_{1}),\quad \v\circ S=\v.
\end{cases}
\end{align*}
\end{remark}
\begin{definition}
Let $(A,\z_{A})$ and $(B,\z_{B})$ be two Hom-algebras, $M$ a linear space and $\z_{M}:M\ra M$ a linear map. Then $(M,\z_{M})$ is called an $(A, B)$-bimodule if $(M,\z_{M})$ is a left $A$-module and a right $B$-module, with notations $A\o M\ra M, a\o m\m a\c m$ and $M\o B\ra M, m\o b\m m\c b$, such that, for any $a\in A$ and $b\in B$:
\begin{align}\label{e1.3}
\z_{A}(a)\c (m\c b)=(a\c m)\c \z_{A}(b).
\end{align}
\end{definition}
If $(B,\z_{B})=(A,\z_{A})$ as Hom-algebra, then $(M,\z_{M})$ defined above is called an $A$-bimodule (see \cite{MP15}).
\begin{definition}
Let $(C,\z_{C})$ and $(D,\z_{D})$ be two Hom-coalgebras, $M$ a linear space and $\z_{M}:M\ra M$ a linear map. Then $(M,\z_{M})$ is called an $(C, D)$-bicomodule if $(M,\z_{M})$ is a left $C$-comodule and a right $D$-comodule, with notations $\r^{l}_{M}: M\ra C\o M, m\m m_{[-1]}\o m_{[0]}$ and $\r^{r}_{M}: M\ra M\o D, m\m m_{(0)}\o m_{(1)}$, such that:
\begin{align}
\z_{C}(m_{[-1]})\o m_{[0](0)}\o m_{[0](1)}=m_{(0)[-1]}\o m_{(0)[0]}\o \z_{D}(m_{(1)}).
\end{align}
\end{definition}
If $(D,\z_{D})=(C,\z_{C})$ as Hom-coalgebra, then $(M,\z_{M})$ defined above is called an $C$-bicomodule (see \cite{LZ18}).

\section{Four-angle Hopf modules}
\def\theequation{2.\arabic{equation}}
\setcounter{equation} {0}
In this section, we introduce the concept of four-angle Hopf modules and equip the category of four-angle Hopf modules with two monoidal structure.
\begin{definition}
Let $(H,\b)$ be a Hom-Hopf algebra , $M$ a linear space and $\z_{M}:M\ra M$ a linear map. Then $(M,\z_{M})$ is called a left-left $H$-Hopf module if
\begin{enumerate}
\item[$(i)$] $(M, \z_{M}, \c)$ is a left $H$-module;
\item[$(ii)$] $(M, \z_{M},\r)$ is a left $H$-comodule;
\item[$(iii)$] the following compatibility condition holds
\begin{align}\label{e2.1}
(h\c m)_{[-1]}\o (h\c m)_{[0]}=h_{1} m_{[-1]}\o h_{2}\c m_{[0]},
\end{align}
\end{enumerate}
for any $h\in H$ and $m\in M$.
\end{definition}
As above, we can also define the left-right, right-left, and right-right $H$-Hopf modules as follows.
\begin{definition}\label{D2.3}
Let $(H,\b)$ be a Hom-Hopf algebra, $M$ a linear space and $\z_{M}:M\ra M$ a linear map. Then
\begin{enumerate}
 \item[$(1)$] $(M,\z_{M})$ is called a left-right $H$-Hopf module if $(M, \z_{M})$ is both a left $H$-module and a right $H$-comodule such that the compatibility condition holds:
    \begin{align}\label{e2.2}
(h\c m)_{(0)}\o (h\c m)_{(1)}=h_{1}\c m_{(0)}\o h_{2}m_{(1)},
    \end{align}
    for any $h\in H$ and $m\in M$.
\item[$(2)$] $(M,\z_{M})$ is called a right-left $H$-Hopf module if $(M, \z_{M})$ is both a right $H$-module and a left $H$-comodule such that the compatibility condition holds:
    \begin{align}\label{e2.3}
(m\c h)_{[-1]}\o (m\c h)_{[0]}= m_{[-1]}h_{1}\o m_{[0]}\c h_{2} ,
    \end{align}
    for any $h\in H$ and $m\in M$.
\item[$(3)$] $(M,\z_{M})$ is called a right-right $H$-Hopf module if $(M, \z_{M})$ is both a right $H$-module and a right $H$-comodule such that the compatibility condition holds:
    \begin{align}\label{e2.4}
  (m\c h)_{(0)}\o (m\c h)_{(1)}=m_{(0)}\c h_{1}\o m_{(1)}h_{2},
    \end{align}
    for any $h\in H$ and $m\in M$.

\end{enumerate}
\end{definition}
\begin{remark}
Note that Definition $\ref{D2.3}$ $(3)$ is different from monoidal Hom-Hopf module in \cite{CG11}.
\end{remark}
Let $(H,\b)$ be a Hom-Hopf algebra such that $\b$ is bijective. We can denote the category of Hopf modules by $\!^{H}_{H}\mathfrak{M}$, $\!_{H}\mathfrak{M}^{H}$, $\!^{H}\mathfrak{M}_{H}$ and $\mathfrak{M}^{H}_{H}$. Take $\mathfrak{M}^{H}_{H}$  whose objects are all right-right $H$-Hopf modules $(M,\z_{M})$ over $H$, with $\z_{M}$ bijective; the morphisms in the category are morphisms of right $H$-modules and a right $H$-comodules.
\begin{example}\label{E1}
Let $(H,\b)$ be a Hom-Hopf algebra.
\begin{enumerate}
\item[$(1)$] Let $V$ be a linear space and $\z_{V}:V\ra V$ a linear map. Then $(H\o V, \b\o \z_{V})$ is a left-left $H$-Hopf module with the following structures:
\begin{align*}
\begin{cases}
h\c(g\o v)=hg\o \z_{V}(v)
\\\r^{l}_{H\o V}(g\o v)=g_{1}\o g_{2}\o \z_{V}(v)
\end{cases}
\end{align*}
for any $h,g\in H$ and $v\in V$.
\item[$(2)$] $(H\o H, \b\o \b)$ is a left-left $H$-Hopf module with the following structure:
    \begin{align*}
\begin{cases}
h\c(g\o k)=hg\o \b(k)
\\\r^{l}_{H\o H}(g\o k)=g_{1}\o g_{2}\o \b(k)
\end{cases}
\end{align*}
for any $h,g,k\in H$.
\end{enumerate}
\end{example}
\begin{lemma}
Let $M$ be a linear space and $\z_{M}\in Aut(M)$ (called the set of all automorphisms of $M$). Then the following statements are equivalent:
\begin{enumerate}
\item[$(i)$] $(M,\z_{M})$ is an $H$-bimodule in $\!^{H}\mathfrak{M}$.
\item[$(ii)$] $(M,\z_{M})$ is a left $H$-comodule in $\!_{H}\mathfrak{M}_{H}$.
\item[$(iii)$] $(M,\z_{M})$ is an $H$-bimodule and a left $H$-comodule such that $(M,\z_{M})\in \!^{H}\mathfrak{M}_{H}$ and $(M,\z_{M})\in \!_{H}^{H}\mathfrak{M}$
\end{enumerate}
\end{lemma}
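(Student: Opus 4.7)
The plan is to show that all three statements unpack to the same underlying data, and hence are pairwise equivalent through a common intermediate formulation. Explicitly, I would introduce a \emph{master list} of axioms on $(M,\z_{M})$: a left $H$-action, a right $H$-action and a left $H$-coaction satisfying (a) the bimodule axiom (1.3), (b) the left comodule axioms, (c) the left-left compatibility (2.1), and (d) the right-left compatibility (2.3). The bulk of the work is then to check that each of (i), (ii), (iii) is equivalent to this master list; the equivalences (i)$\Leftrightarrow$(ii)$\Leftrightarrow$(iii) follow immediately.

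For (i)$\Leftrightarrow$master, I would interpret ``$(M,\z_{M})$ is an $H$-bimodule in $\!^{H}\mathfrak{M}$'' as the statement that $(M,\z_{M})$ lies in $\!^{H}\mathfrak{M}$ and carries commuting left and right $H$-actions such that both action maps are morphisms of left $H$-comodules. The object condition supplies the left coaction, and viewing the left action $H\o M\lr M$ as a coaction morphism, with $H\o M$ carrying the codiagonal left coaction $h\o m\mapsto h_{1}m_{[-1]}\o(h_{2}\o m_{[0]})$ (twisted by $\b$ and $\z_{M}$ as dictated by Example \ref{E1}), recovers (2.1). The bimodule axiom is (1.3), and requiring the right action $M\o H\lr M$ to be a morphism of left $H$-comodules, where $M\o H$ now carries the codiagonal coaction $m\o h\mapsto m_{[-1]}h_{1}\o(m_{[0]}\o h_{2})$, yields (2.3).

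For (ii)$\Leftrightarrow$master, I would read ``$(M,\z_{M})$ is a left $H$-comodule in $\!_{H}\mathfrak{M}_{H}$'' as: $(M,\z_{M})$ is an $H$-bimodule with a left coaction $\r^{l}:M\lr H\o M$ that is a morphism of $H$-bimodules, where $H\o M$ is equipped with the codiagonal bimodule structure $h\c(k\o m)=h_{1}k\o h_{2}\c m$ and $(k\o m)\c h=kh_{1}\o m\c h_{2}$. Asking $\r^{l}$ to preserve the left action gives (2.1), and asking it to preserve the right action gives (2.3). For (iii)$\Leftrightarrow$master, membership in $\!^{H}_{H}\mathfrak{M}$ is by definition the data of a left action, a left coaction, and (2.1); membership in $\!^{H}\mathfrak{M}_{H}$ is by definition the data of a right action, a left coaction, and (2.3); combined with the bimodule hypothesis this is exactly the master list.

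Overall the proof is a definition-chasing exercise once the correct codiagonal $H$-module and $H$-comodule structures on $H\o M$ and $M\o H$ are written down. The only genuine subtlety I anticipate is keeping the twists by $\b$ and $\z_{M}$ consistent across the three formulations so that the Hom-associativity and Hom-coassociativity axioms (see Definitions \ref{D1.3} and after) are preserved on the relevant tensor products. The bijectivity of $\z_{M}$ hypothesized in the statement is not actually used in the equivalence itself; it is recorded here because it will be needed for the monoidal-category constructions that follow.
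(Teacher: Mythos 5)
Your proof is correct, and it is essentially the argument the paper leaves implicit: the lemma is stated there without proof, being the Hom-analogue of Schauenburg's definition-chasing observation, and your master list (left action, right action, left coaction, the bimodule axiom (\ref{e1.3}), the comodule axioms, and the compatibilities (\ref{e2.1}) and (\ref{e2.3})) is precisely the common unpacking of (i), (ii) and (iii), with colinearity of the two action maps in (i) and bilinearity of the coaction in (ii) each translating verbatim into (\ref{e2.1}) and (\ref{e2.3}). The one subtlety you flag resolves cleanly: no extra twisting is needed, since the untwisted codiagonal coaction $h\o m\m h_{1}m_{[-1]}\o (h_{2}\o m_{[0]})$ already makes $(H\o M,\b\o \z_{M})$ a Hom-comodule (the Hom-coassociativity identities $\b(h_{1})\o \D(h_{2})=\D(h_{1})\o \b(h_{2})$ and $\b(m_{[-1]})\o \r^{l}(m_{[0]})=\D(m_{[-1]})\o \z_{M}(m_{[0]})$ interleave leg by leg), and likewise the untwisted diagonal actions $h\c(k\o m)=h_{1}k\o h_{2}\c m$, $(k\o m)\c h=kh_{1}\o m\c h_{2}$ used in (ii) satisfy the Hom-module axioms — note that Example \ref{E1}(1) records the free structure on $H\o V$, not the codiagonal one, so your parenthetical appeal to it is misleading though harmless — and your observation that bijectivity of $\z_{M}$ is never used in the equivalence itself (only in the later categorical constructions) is accurate.
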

We call $(M,\z_{M})$ a two-sided $H$-Hopf module and denote by $\!_{H}^{H}\mathfrak{M}_{H}$ the category of these objects with morphisms are left and right linear and left colinear. In the same manner, we can define the category $\!_{H}\mathfrak{M}^{H}_{H}$.

Similarly, we can define the category $\!_{H}^{H}\mathfrak{M}^{H}$ (and $\!^{H}\mathfrak{M}^{H}_{H}$), whose objects are called two-cosided 
$H$-Hopf modules.
\begin{lemma}
Let $M$ be a linear space and $\z_{M}\in Aut(M)$. Then the following statements are equivalent:
\begin{enumerate}
\item[$(i)$] $(M,\z_{M})$ is an $H$-bimodule in $\!^{H}\mathfrak{M}^{H}$.
\item[$(ii)$] $(M,\z_{M})$ is an $H$-bicomodule in $\!_{H}\mathfrak{M}_{H}$.
\item[$(iii)$] $(M,\z_{M})$ is an $H$-bimodule and an $H$-bicomodule such that $(M,\z_{M})\in \!_{H}^{H}\mathfrak{M}, \!_{H}\mathfrak{M}^{H},$ $\!^{H}\mathfrak{M}_{H}, \mathfrak{M}_{H}^{H}$.
\end{enumerate}
\end{lemma}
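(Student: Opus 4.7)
The plan is to show that each of (i), (ii), (iii) unpacks to exactly the same package of data and axioms, namely: a linear automorphism $\z_M\in\mathrm{Aut}(M)$, a left $H$-action, a right $H$-action, a left $H$-coaction, a right $H$-coaction, satisfying the bimodule axiom (\ref{e1.3}), the bicomodule axiom, and the four Hopf-module compatibility conditions (\ref{e2.1})--(\ref{e2.4}) (one for each of the four action--coaction pairs). Once this common target is identified, the equivalence becomes a matter of correctly reading each of (i), (ii), (iii).

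First I would carefully interpret (i). Saying that $(M,\z_M)$ is an $H$-bimodule in $\!^{H}\mathfrak{M}^{H}$ means that $M$ carries commuting left and right $H$-actions (the bimodule axiom), a left and a right $H$-coaction making $M$ an $H$-bicomodule, and that both structure maps $H\o M\to M$ and $M\o H\to M$ are morphisms in $\!^{H}\mathfrak{M}^{H}$, i.e.\ morphisms of bicomodules. Requiring the left action to be bicolinear yields precisely conditions (\ref{e2.1}) and (\ref{e2.2}); requiring the right action to be bicolinear yields precisely (\ref{e2.3}) and (\ref{e2.4}). Hence (i) is equivalent to (iii).

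Next I would interpret (ii) analogously. Saying that $(M,\z_M)$ is an $H$-bicomodule in $\!_{H}\mathfrak{M}_{H}$ means $M$ is an $H$-bimodule and both coaction maps $\r^l$ and $\r^r$ are morphisms of $H$-bimodules. The condition that $\r^l$ be bilinear unpacks to (\ref{e2.1}) (compatibility with the left action) together with (\ref{e2.3}) (compatibility with the right action); the condition that $\r^r$ be bilinear unpacks to (\ref{e2.2}) and (\ref{e2.4}). Thus (ii) is again equivalent to (iii), and transitivity closes the triangle.

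The work is routine once the conventions are fixed; the only real care needed is in step two, where the phrase ``morphism in $\!^{H}\mathfrak{M}^{H}$'' must be read as bicolinear (rather than merely linear in one variable), and dually in step three. I expect the main obstacle to be purely notational: one must consistently expand each coaction in Sweedler form $\r^l(m)=m_{[-1]}\o m_{[0]}$ and $\r^r(m)=m_{(0)}\o m_{(1)}$ on both sides of every compatibility, and verify that the four axioms (\ref{e2.1})--(\ref{e2.4}) produced in steps two and three really coincide. No Hom-specific difficulty appears beyond keeping track of $\b$ and $\z_M$ in the bicomodule/bimodule axioms, since the antipode, unit, and counit play no role in this identification.
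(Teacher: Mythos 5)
Your proposal is correct and is precisely the routine unpacking the paper intends here (it states this lemma without printing a proof): reading (i) as ``both actions are bicolinear'' gives (\ref{e2.1})--(\ref{e2.2}) for the left action and (\ref{e2.3})--(\ref{e2.4}) for the right one, reading (ii) as ``both coactions are bilinear'' gives (\ref{e2.1}), (\ref{e2.3}) for $\r^{l}$ and (\ref{e2.2}), (\ref{e2.4}) for $\r^{r}$, and both decode to (iii). The one Hom-specific point you dismiss does deserve care: to recover (\ref{e2.1})--(\ref{e2.4}) \emph{verbatim} you must equip $H\o M$ and $M\o H$ with the untwisted diagonal structures (e.g.\ $g\c (h\o m)=g_{1}h\o g_{2}\c m$ and $\r^{l}(h\o m)=h_{1}m_{[-1]}\o h_{2}\o m_{[0]}$, which do satisfy the Hom-axioms), because with the $\b^{-1}$-twisted diagonals the paper uses elsewhere (Proposition \ref{P2.6}, Lemma \ref{L3.3}) colinearity of the action would instead read $(h\c m)_{[-1]}\o (h\c m)_{[0]}=\b^{-1}(h_{1}m_{[-1]})\o h_{2}\c m_{[0]}$, a condition that differs from (\ref{e2.1}) by a shift of $\b$ on the first leg.
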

We call $(M,\z_{M})$ a \emph{four-angle Hopf module} and denote by $\!_{H}^{H}\mathfrak{M}_{H}^{H}$ the category of these objects  with morphisms are linear and colinear on both sides. Also, There are some examples of four-angle Hopf module.
\begin{example}\label{E2.7}
Let $(H, \b)$ be a Hom-Hopf algebra such that the structure map $\b$ is bijective.
\begin{enumerate}
\item[$(1)$] $H_{a}:=H\o H$ is an object in $\!_{H}^{H}\mathfrak{M}_{H}^{H}$ with the following structures:
\begin{align*}
\begin{cases}
h\c (x\o y)=hx\o \b(y),\quad \r^{l}_{a}(x\o y)=\b^{-1}(x_{1}y_{1})\o x_{2}\o y_{2},
\\(x\o y)\c h=\b(x)\o yh,\quad \r^{r}_{a}(x\o y)=x_{1}\o y_{1}\o \b^{-1}(x_{2}y_{2}),
\end{cases}
\end{align*}
for any $x, y, h\in H$.
\item[$(2)$] $H_{b}:=H\o H$ is an object in $\!_{H}^{H}\mathfrak{M}_{H}^{H}$ with the following structures:
\begin{align*}
\begin{cases}
h\c (x\o y)=\b^{-1}(h_{1})x\o \b^{-1}(h_{2})y,\quad \r^{l}_{b}(x\o y)=x_{1}\o x_{2}\o \b(y),
\\(x\o y)\c h=x\b^{-1}(h_{1})\o y\b^{-1}(h_{2}),\quad\r^{r}_{b}(x\o y)=\b(x)\o y_{1}\o y_{2},
\end{cases}
\end{align*}
for any $x, y, h\in H$.
\item[$(3)$] $H_{c}:=H\o H$ is an object in $\!_{H}^{H}\mathfrak{M}_{H}^{H}$ with the following structures:
\begin{align*}
\begin{cases}
h\c (x\o y)=hx\o \b(y),\quad (x\o y)\c h=x\b^{-1}(h_{1})\o S\b^{-2}(h_{21})(\b^{-1}(y)\b^{-3}(h_{22})),
\\\r^{l}_{c}(x\o y)=x_{1}\o x_{2}\o \b(y),\quad \r^{r}_{c}(x\o y)=x_{1}\o y_{1}\o \b^{-1}(x_{2}y_{2}),
\end{cases}
\end{align*}
for any $x, y, h\in H$.
\item[$(4)$] $H_{d}:=H\o H$ is an object in $\!_{H}^{H}\mathfrak{M}_{H}^{H}$ with the following structures:
\begin{align*}
\begin{cases}
h\c (x\o y)=hx\o \b(y),\qquad~(x\o y)\c h=x\b^{-1}(h_{1})\o y\b^{-1}(h_{2}),
\\\r^{l}_{d}(x\o y)=x_{1}\o x_{2}\o \b(y),
\\\r^{r}_{d}(x\o y)=x_{1}\o \b^{-1}(y_{12})\o \b^{-1}(x_{2})(S\b^{-3}(y_{11})\b^{-2}(y_{2})),
\end{cases}
\end{align*}
for any $x, y, h\in H$.
\end{enumerate}
\end{example}

In the following, we will give two structures of a monoidal category of four-angle Hopf modules. First, one introduce the first structure. Let $(M,\z_{M})$ and $(N,\z_{N})$ be two four-angle Hopf modules over $H$ with bijective structure map. The Hom-tensor product $(M\o_{H}N, \z_{M}\o \z_{N})$ of $(M,\z_{M})$ and $(N,\z_{N})$ is defined by
\begin{align*}
M\o_{H}N:=\{m\o n\in M\o N ~| ~m\c h\o \z_{N}(n)=\z_{M}(m)\o h\c n, \forall ~h\in H\}.
\end{align*}
\begin{proposition}\label{P2.6}
Let $(H,\b)$ be a Hom-Hopf algebra such that $\b$ is bijective and assume that $(M,\z_{M})$ and $(N,\z_{N})$ are two four-angle Hopf modules over $H$, Then $(M\o_{H}N, \z_{M}\o \z_{N})\in \!_{H}^{H}\mathfrak{M}_{H}^{H}$ with the structures as follows:
\begin{align*}
h\c(m\o n)&=h\c m\o \z_{N} (n),
\\ \r^{l}(m\o n)&= \b^{-1}(m_{[-1]})\b^{-1}(n_{[-1]})\o m_{[0]}\o n_{[0]},
\\(m\o n)\c h&=\z_{M} (m)\o n\c h,
\\ \r^{r}(m\o n)&=m_{(0)}\o n_{(0)}\o \b^{-1}(m_{(1)})\b^{-1}(n_{(1)}),
\end{align*}
where $h\in H$ and $m\o n\in M\o_{H}N$.
\end{proposition}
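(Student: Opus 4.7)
The plan is to prove Proposition \ref{P2.6} by a three-stage argument: first check that each of the four proposed structure maps is well-defined on $M \otimes_H N$; then verify the Hom-module and Hom-comodule axioms; and finally establish the bimodule, bicomodule, and four Hopf-module compatibility conditions that qualify $(M \otimes_H N, \zeta_M \otimes \zeta_N)$ as an object of $\!^H_H \mathfrak{M}^H_H$.

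For well-definedness, stability of $M \otimes_H N$ under $\zeta_M \otimes \zeta_N$ is immediate from (\ref{e1.1.2}) and the defining relation. For the two actions, I would use the Hom-bimodule axiom (\ref{e1.3}) together with the defining relation to verify that $h \cdot (m \otimes n)$ and $(m \otimes n) \cdot h$ lie again in $M \otimes_H N$; bijectivity of $\zeta_M, \zeta_N$ is what allows one to absorb the twist coming from (\ref{e1.1.2}). The verification for the two coactions is more delicate. To see that
\begin{align*}
\rho^{l}(m \otimes n) = \beta^{-1}(m_{[-1]})\beta^{-1}(n_{[-1]}) \otimes m_{[0]} \otimes n_{[0]}
\end{align*}
actually takes values in $H \otimes (M \otimes_H N)$, I would start from the defining relation $m \cdot h \otimes \zeta_N(n) = \zeta_M(m) \otimes h \cdot n$, apply $\rho^{l}_M$ to the $M$-factor and $\rho^{l}_N$ to the $N$-factor on both sides, invoke the Hopf-module compatibility (\ref{e2.3}) for $M$ and (\ref{e2.1}) for $N$, multiply the two resulting $H$-valued tensor slots together, and use Hom-associativity in $H$ to match both sides. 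Bijectivity of $\beta$ is used here to pass between $\beta$ and $\beta^{-1}$ twists in the $H$-slot. The right coaction is handled symmetrically via (\ref{e2.4}) and (\ref{e2.2}).

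For the remaining axioms, each reduces to the corresponding axiom for $M$ and $N$ individually. The Hom-action axioms on $M \otimes_H N$ follow from those on $M$ and $N$ (Definition \ref{D1.3}), since each action only touches one of the two tensor factors. Hom-coassociativity of $\rho^{l}$ follows from Hom-coassociativity of the coactions on $M$ and $N$ together with the fact that $\Delta : H \to H \otimes H$ is a morphism of Hom-algebras, which lets one distribute the coproduct over the product $\beta^{-1}(m_{[-1]}) \beta^{-1}(n_{[-1]})$ appearing in the coaction formula. The $H$-bimodule axiom (\ref{e1.3}) on $M \otimes_H N$ is essentially automatic because the two actions touch disjoint tensor factors up to the twists $\zeta_N, \zeta_M$. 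The bicomodule axiom and the four Hopf-module compatibilities (\ref{e2.1})--(\ref{e2.4}) then follow by expanding each side in terms of the structures of $M$ and $N$, redistributing the coproducts of products via the algebra-morphism property of $\Delta$, and collecting terms with Hom-associativity.

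The main obstacle is the well-definedness of the two coactions. Because $M \otimes_H N$ is defined as an equalizer-type subspace of $M \otimes N$ rather than as a quotient, one cannot simply argue ``by descent'' as in the classical Hopf-algebra setting; instead one must verify directly that the image of $\rho^{l}$ (resp.\ $\rho^{r}$) lies back in $H \otimes (M \otimes_H N)$ (resp.\ $(M \otimes_H N) \otimes H$). This is the step that uses the full four-angle structure of $M$ and $N$ simultaneously, and the $\beta^{-1}$ twist appearing in the coaction formula is precisely what is needed to balance the $\beta$ twists produced by the Hom-axioms, which is also why the hypothesis of bijectivity of $\beta$ is essential.
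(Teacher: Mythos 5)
Your overall skeleton is the same as the paper's: the paper's proof of Proposition \ref{P2.6} verifies that the two actions preserve the subspace $M\o_{H}N$ (essentially your combination of the bimodule axiom (\ref{e1.3}) with the defining relation) and then checks the four compatibilities (\ref{e2.1})--(\ref{e2.4}) by direct expansion, asserting the bimodule and bicomodule axioms without proof; your deferral of those expansions as routine is harmless. The substantive difference is that you isolate well-definedness of the two coactions as the main obstacle --- a point the paper passes over in silence --- and there you commit to a specific derivation. That derivation does not close. Carrying out exactly the moves you describe (apply the coaction formula to both sides of $m\c h\o \z_{N}(n)=\z_{M}(m)\o h\c n$, use (\ref{e2.3}) for $M$, (\ref{e2.1}) for $N$, multiply the $H$-slots, rewrite by Hom-associativity) one finds that the two $H$-slots become \emph{identically equal} expressions, $\b^{-1}(m_{[-1]}h_{1})n_{[-1]}=m_{[-1]}\b^{-1}(h_{1}n_{[-1]})$, so all you obtain is
\begin{align*}
m_{[-1]}\b^{-1}(h_{1}n_{[-1]})\o\bigl[m_{[0]}\c h_{2}\o \z_{N}(n_{[0]})-\z_{M}(m_{[0]})\o h_{2}\c n_{[0]}\bigr]=0,
\end{align*}
in which the Sweedler legs $h_{1}$ (in the $H$-slot) and $h_{2}$ (in the action slot) remain entangled. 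Membership of $\r^{l}(m\o n)$ in $H\o(M\o_{H}N)$ requires instead, for every $g\in H$ \emph{independent} of the $H$-slot,
\begin{align*}
\b^{-1}(m_{[-1]})\b^{-1}(n_{[-1]})\o\bigl[m_{[0]}\c g\o \z_{N}(n_{[0]})-\z_{M}(m_{[0]})\o g\c n_{[0]}\bigr]=0 .
\end{align*}
No Hom-associativity rewriting decouples $h_{1}$ from $h_{2}$: specializing $h=1_{H}$ is vacuous, and the obvious antipode insertions produce terms of the form $h_{2}S(h_{1})$, which equal $\v(h)1_{H}$ only in the cocommutative case. So the step you flag as the crux is precisely where your argument fails, and ``bijectivity of $\b$'' plays no role in resolving it.

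There is a telling diagnosis here. The identity you do obtain is exactly the \emph{descent} computation: it shows that the relator subspace $R=\mathrm{span}\{m\c h\o \z_{N}(n)-\z_{M}(m)\o h\c n\}$ satisfies $\r^{l}(R)\subseteq H\o R$, so the coaction formula descends to the quotient $(M\o N)/R$. In other words, your sketch proves well-definedness for the coequalizer version of $\o_{H}$ --- the version you explicitly set aside --- and proves nothing for the equalizer-type subspace the paper literally defines. Note moreover that on the literal subspace reading the paper's own framework is already in trouble before one reaches coactions: taking $\b=\mathrm{id}$, $\z=\mathrm{id}$, $H=\Bbbk G$, the unit map $\widetilde{r}':m\m m\o 1_{H}$ of Proposition \ref{P2.8} does not land in $M\o_{H}H$ (one needs $m\c h\o 1=\z_{M}(m)\o h$ for all $h$, which fails already for $m=1_{H}$), so the computations throughout Section 2 are only coherent on the quotient reading, where well-definedness means descent. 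Either way, your proposal as written has a genuine gap: under the subspace reading the coaction membership is not established (and would at minimum require an antipode-based decoupling argument you do not supply), while under the quotient reading your stated framing of the problem --- ``one cannot argue by descent'' --- is the opposite of what your own computation accomplishes.
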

\begin{proof}
First, we check that the actions stated are well defined. For any $h,g\in H$ and $m\o n\in M\o_{H}N$, we have
\begin{align*}
h\c(m\c g\o \z_{N}(n))&=h\c (m\c g)\o \z_{N}^{2} (n)
\\&=(\b^{-1}(h)\c m)\c\b( g)\o \z_{N}^{2} (n)
\\&=h\c \z_{M}(m)\o\b( g)\c \z_{N}  (n)
\\&=h\c(\z_{M}(m)\o g\c n)
\end{align*}
Thus $h\c(m\o n)\in M\o_{H}N$. Similarly, we have $(m\o n)\c h\in M\o_{H}N$.
It is easy to show that $M\o_{H}N$ is both an $(H,\b)$-bimodule and an $(H,\b)$-bicomodule.

Finally, we verify the four compatibility conditions Eq.$(\ref{e2.1})$, $(\ref{e2.2})$, $(\ref{e2.3})$ and $(\ref{e2.4})$. For any $h,g\in H$ and $m\o n\in M\o_{H}N$, we have
\begin{align*}
\r^{l}(h\c(m\o n))&=\r^{l}(h\c m\o \z_{N} (n))
\\&=\b^{-1}[(h\c m)_{[-1]}\b (n_{[-1]})]\o (h\c m)_{[0]}\o \z_{N}(n_{[0]})
\\&=\b^{-1}(h_{1}m_{[-1]})n_{[-1]}\o h_{2}\c m_{[0]}\o \z_{N}(n_{[0]})
\\&=h_{1}\b^{-1}(m_{[-1]}n_{[-1]}))\o h_{2}\c [m_{[0]}\o n_{[0]}]
\\&=h_{1}(m\o n)_{[-1]}\o h_{2}\c (m\o n)_{[0]}.
\\\r^{l}((m\o n)\c h)&=\r^{l}(\z_{M} (m)\o n\c h)
\\&=\b^{-1} [\b (m_{[-1]})(n\c h)_{[-1]})]\o \z(m_{[0]})\o (n\c h)_{[0]}
\\&=m_{[-1]}\b^{-1}[n_{[-1]}h_{1}]\o \z(m_{[0]})\o n_{[0]}\c h_{2}
\\&=\b^{-1}[m_{[-1]}n_{[-1]}]h_{1}\o (m_{[0]}\o n_{[0]})\c h_{2}
\\&=(m\o n)_{[-1]}h_{1}\o (m\o n)_{[0]}\c h_{2}.
\\\r^{r}(h\c(m\o n))&=\r^{r}(h\c m\o \z_{N} (n))
\\&=(h\c m)_{(0)}\o \z_{N}(n_{(0)})\o \b^{-1}[(h\c m)_{(1)}\b (n_{(1)})]
\\&=h_{1}\c  m_{(0)}\o \z_{N}(n_{(0)})\o \b^{-1}(h_{2}m_{(1)})n_{(1)}
\\&=h_{1}\c [m_{(0)}\o n_{(0)}]\o h_{2}(\b^{-1}(m_{(1)})n_{(1)})
\\&=h_{1}\c  (m\o n)_{(0)}\o h_{2}(m\o n)_{(1)}.
\\\r^{r}((m\o n)\c h)&=\r^{r}(\z_{M} (m)\o n\c h)
\\&=\z_{M}(m_{(0)})\o (n\c h)_{(0)}\o \b^{-1}[\b(m_{(1)})(n\c h)_{(1)}]
\\&=\z_{M}(m_{(0)})\o n_{(0)}\c h_{1}\o m_{(1)}\b^{-1}(n_{(1)}h_{2})
\\&=[m_{(0)}\o n_{(0)}]\c h_{1}\o   \b^{-1}(m_{(1)} n_{(1)})h_{2}
\\&=(m\o n)_{(0)}\c h_{1}\o  (m\o n)_{(1)}h_{2}.
\end{align*}

This completes the proof.
\end{proof}
\begin{proposition}\label{P2.7}
Let $(H,\b)$ be a Hom-Hopf algebra such that $\b$ is bijective and assume that $(M,\z_{M})$, $(N,\z_{N})$  and $(P,\z_{P})$ are three four-angle Hopf modules over $H$, with notation as above. Define the linear map, for any $m\in M$, $n\in N$ and $p\in P$,
\begin{align*}
\widetilde{a}_{M,N,P}:(M\o_{H} N)\o_{H} P\ra M\o_{H} (N\o_{H} P),\quad \widetilde{a}_{M,N,P}((m\o n)\o p)= m\o (n\o p).
\end{align*}
Then $\widetilde{a}_{M,N,P}$ is an isomorphism of $H$-modules and $H$-comodules on both sides.
\end{proposition}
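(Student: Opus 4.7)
The map $\widetilde{a}_{M,N,P}$ is the restriction of the canonical vector-space associator $(M\o N)\o P\ra M\o(N\o P)$, which after identifying both sides with $M\o N\o P$ is simply the identity. So the real content of the proposition is that the two subspaces $(M\o_{H} N)\o_{H} P$ and $M\o_{H}(N\o_{H} P)$, both viewed inside $M\o N\o P$, actually coincide; once this is in place, $H$-bilinearity and $H$-bicolinearity of $\widetilde{a}_{M,N,P}$ will follow by direct substitution of the formulas from Proposition \ref{P2.6}.

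First I would unfold the two subspaces using the explicit four-angle structures on $M\o_{H}N$ and $N\o_{H}P$ given by Proposition \ref{P2.6}. For an arbitrary element $\xi=\sum_{k} m_{k}\o n_{k}\o p_{k}\in M\o N\o P$ and every $h\in H$, membership in $(M\o_{H} N)\o_{H} P$ is equivalent to the pair of identities
\begin{align*}
(B_{1})\ &\ \sum_{k}(m_{k}\c h)\o \z_{N}(n_{k})\o p_{k}=\sum_{k}\z_{M}(m_{k})\o (h\c n_{k})\o p_{k},\\
(B_{2})\ &\ \sum_{k}\z_{M}(m_{k})\o (n_{k}\c h)\o \z_{P}(p_{k})=\sum_{k}\z_{M}(m_{k})\o \z_{N}(n_{k})\o (h\c p_{k}),
\end{align*}
whereas membership in $M\o_{H}(N\o_{H} P)$ is equivalent to
\begin{align*}
(A_{1})\ &\ \sum_{k}m_{k}\o (n_{k}\c h)\o \z_{P}(p_{k})=\sum_{k}m_{k}\o \z_{N}(n_{k})\o (h\c p_{k}),\\
(A_{2})\ &\ \sum_{k}(m_{k}\c h)\o \z_{N}(n_{k})\o \z_{P}(p_{k})=\sum_{k}\z_{M}(m_{k})\o (h\c n_{k})\o \z_{P}(p_{k}).
\end{align*}

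The key observation is then elementary: applying $\z_{P}$ to the third tensor leg converts $(B_{1})$ into $(A_{2})$, and applying $\z_{M}$ to the first leg converts $(A_{1})$ into $(B_{2})$; the inverses $\z_{P}^{-1}$ and $\z_{M}^{-1}$, which exist by the standing bijectivity hypothesis in $\!^{H}_{H}\mathfrak{M}^{H}_{H}$, provide the converse implications. Hence $(B_{1})\wedge(B_{2})$ is equivalent to $(A_{1})\wedge(A_{2})$, the two subspaces agree, and $\widetilde{a}_{M,N,P}$ is a well-defined linear isomorphism whose inverse is induced by $m\o(n\o p)\mapsto(m\o n)\o p$. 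Compatibility with the two $H$-actions is then immediate since each action from Proposition \ref{P2.6} operates on a single leg of $M\o N\o P$; compatibility with the two $H$-coactions reduces to the Hom-associativity identity $\b(a)(bc)=(ab)\b(c)$ applied to the multiplication of the two $H$-valued coaction labels that appear after one further re-bracketing.

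The main obstacle is the equality-of-subspaces step; the essential ingredient is the bijectivity of $\z_{M}$ and $\z_{P}$, which is precisely what allows the twists to be moved between the $(A_{i})$ and $(B_{i})$ identities and so forces the two different defining conditions to cut out the same subspace of $M\o N\o P$.
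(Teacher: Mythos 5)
Your proposal is correct and matches the paper's intent: the paper dismisses Proposition \ref{P2.7} with ``The proof is straightforward,'' and your direct verification is precisely that straightforward argument --- both iterated products $(M\otimes_{H}N)\otimes_{H}P$ and $M\otimes_{H}(N\otimes_{H}P)$ are cut out inside $M\otimes N\otimes P$ by conditions that correspond under the injective maps $\mathrm{id}\otimes\mathrm{id}\otimes\zeta_{P}$ and $\zeta_{M}\otimes\mathrm{id}\otimes\mathrm{id}$, the two $H$-actions from Proposition \ref{P2.6} literally coincide on the common subspace, and the two coactions agree via the Hom-associativity $(ab)\beta(c)=\beta(a)(bc)$ applied to the coaction labels. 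Your appeal to bijectivity of $\zeta_{M}$ and $\zeta_{P}$ is legitimate since objects of $\!^{H}_{H}\mathfrak{M}^{H}_{H}$ are assumed to have bijective structure maps, so nothing is missing.
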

The proof is straightforward.
\begin{proposition}\label{P2.8}
Let $(H,\b)$ be a Hom-Hopf algebra such that $\b$ is bijective and assume that $(M,\z_{M})$ is a four-angle Hopf module over $H$, Then as four-angle Hopf modules
\begin{align*}
M\o_{H} H\simeq M\simeq H\o_{H} M,
\end{align*}
where $(H,\b)$ be a four-angle Hopf module with its multiplication and comultiplication.
\end{proposition}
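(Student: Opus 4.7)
The plan is to exhibit explicit mutually inverse morphisms. For the right isomorphism define
\begin{align*}
\mu : M\o_{H}H \lr M,\quad m\o h\m \z_{M}^{-1}(m\c h), \qquad \nu:M\lr M\o_{H}H,\quad m\m m\o 1_{H},
\end{align*}
and symmetrically $\lambda(h\o m) = \z_{M}^{-1}(h\c m)$ with $\kappa(m) = 1_{H}\o m$ for the left isomorphism.

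First I would verify well-definedness and the inverse identities. That $\mu$ factors through the balancing relation of $\o_{H}$ is exactly the right Hom-module axiom $(m\c k)\c\b(h) = \z_{M}(m)\c(kh)$ after applying $\z_{M}^{-1}$. The identity $\mu\circ\nu = \mathrm{id}_{M}$ reduces to the unit axiom $m\c 1_{H} = \z_{M}(m)$. For $\nu\circ\mu$, rewrite $\z_{M}^{-1}(m\c h)\o 1_{H} = \z_{M}^{-1}(m)\c\b^{-1}(h)\o 1_{H}$ using $\z_{M}^{-1}(x\c a) = \z_{M}^{-1}(x)\c\b^{-1}(a)$; then apply the balancing relation with parameters $\z_{M}^{-1}(m), \b^{-1}(h), 1_{H}$ to recognise this element as $m\o h$.

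The bulk of the proof consists in showing that $\mu$ respects the full four-angle $H$-Hopf module structure given on $M\o_{H}H$ by Proposition~\ref{P2.6}. Compatibility with the structure maps $\z$ is immediate. For the left and right $H$-actions, the Hom-bimodule identity $\b(a)\c(x\c b) = (a\c x)\c\b(b)$ absorbs the $\b$-twists built into $h\c(m\o k) = h\c m\o \b(k)$ and $(m\o k)\c h = \z_{M}(m)\o kh$, yielding $\mu(h\c(m\o k)) = h\c\mu(m\o k)$ and $\mu((m\o k)\c h) = \mu(m\o k)\c h$. For the coactions, expand the formulas of Proposition~\ref{P2.6}, use $\b^{-1}(a)\b^{-1}(b) = \b^{-1}(ab)$ (since $\b$ is a bialgebra map), and invoke the right-left and right-right four-angle compatibilities (\ref{e2.3}) and (\ref{e2.4}) to identify $m_{[-1]}k_{1}\o m_{[0]}\c k_{2}$ with $(m\c k)_{[-1]}\o (m\c k)_{[0]}$ and $m_{(0)}\c k_{1}\o m_{(1)}k_{2}$ with $(m\c k)_{(0)}\o (m\c k)_{(1)}$.

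The hardest part will be the coaction compatibility, where one must carefully track the $\b^{\pm 1}$-twists introduced by Proposition~\ref{P2.6} and align them with the mixed four-angle compatibilities on $M$; once the bookkeeping is set up the identities collapse by routine Hom-(co)associativity. The proof of $H\o_{H}M\simeq M$ via $\lambda$ and $\kappa$ is entirely symmetric, using the left-left compatibility (\ref{e2.1}) and the left-right compatibility (\ref{e2.2}) in place of the right-side versions.
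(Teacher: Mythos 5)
Your proposal is correct and takes essentially the same route as the paper: the paper likewise inverts $\widetilde{r}':m\m m\o 1_{H}$ by means of the balancing identity $m\o g=\z_{M}^{-1}(m)\c \b^{-1}(g)\o 1_{H}$ (your ``parameters $\z_{M}^{-1}(m),\b^{-1}(h),1_{H}$'' step), and then checks (co)linearity against the structure maps of Proposition \ref{P2.6} exactly as you outline, using the compatibilities (\ref{e2.1})--(\ref{e2.4}). The only cosmetic difference is that the paper writes the unit constraint as $\widetilde{r}(m\o g)=m\v(g)$ rather than your $\mu(m\o g)=\z_{M}^{-1}(m\c g)$, but these coincide on $M\o_{H}H$ by that very identity, since $\widetilde{r}\bigl(\z_{M}^{-1}(m)\c \b^{-1}(g)\o 1_{H}\bigr)=\z_{M}^{-1}(m)\c \b^{-1}(g)=\z_{M}^{-1}(m\c g)$.
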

\begin{proof}
According to Proposition \ref{P2.6}, the structures of $M\o_{H} H$ is defined as follows
\begin{align*}
h\c(m\o g)&=h\c m\o \b(g),
\\ \r^{l}(m\o g)&= \b^{-1}(m_{[-1]}g_{1})\o m_{[0]}\o g_{2},
\\(m\o g)\c h&=\z_{M} (m)\o gh,
\\ \r^{r}(m\o g)&=m_{(0)}\o g_{1}\o \b^{-1}(m_{(1)}g_{2}),
\end{align*}
for any $m\in M$ and $h, g\in H$.

We define $\widetilde{r}:M\o_{H} H \ra M$ by $m\o g \m m\v(g)$ and $\widetilde{r}': M\ra M\o_{H} H $ by $m\m m\o 1_{H}$. First, one verifies that $\widetilde{r}\circ\widetilde{r}'=id$ and $\widetilde{r}'\circ\widetilde{r}=id$. For any $m\o g\in M\o_{H} H$, one have $m\o g=\z^{-1}(m)\c \b^{-1}(g)\o 1$. Thus
\begin{align*}
\widetilde{r}\circ\widetilde{r}'(m)=\widetilde{r}(m\o 1_{H})=m
\end{align*}
and
\begin{align*}
\widetilde{r}'\circ\widetilde{r}(m\o g)=\widetilde{r}'\circ\widetilde{r}(\z^{-1}(m)\c \b^{-1}(g)\o 1)=\widetilde{r}'(\z^{-1}(m)\c \b^{-1}(g))=m\o g
\end{align*}

Finally, it is easy to check that $\widetilde{r}$ is a map of $H$-modules and $H$-comodules on both sides. Thus $\widetilde{r}$ is an isomorphism of four-angle Hopf modules. Similarly, we can get $M\simeq H\o_{H} M$ by checking that the map
\begin{align*}
\widetilde{l}:H\o_{H} M\ra M, g\o m\m \v(g)m
\end{align*}
is an isomorphism.
\end{proof}

By the definition of monoidal category and Proposition \ref{P2.6}, \ref{P2.7} and \ref{P2.8}. we can easily obtain that the category $\!^{H}_{H}\mathfrak{M}^{H}_{H}$ is a monoidal category with the unit object $(H, \b)$.
\begin{proposition}\label{P2.9}
Let $(H,\b)$ be a Hom-Hopf algebra such that $\b$ is bijective. Then $(\!^{H}_{H}\mathfrak{M}^{H}_{H},$ $\o_{H},H,\widetilde{a}, \widetilde{r}, \widetilde{l})$ is a monoidal category.
\end{proposition}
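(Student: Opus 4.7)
The plan is to assemble the monoidal-category data piece by piece, using Propositions~\ref{P2.6}, \ref{P2.7} and \ref{P2.8} as the bulk of the content, and then check that the remaining functoriality, naturality and coherence axioms are essentially automatic because the associator and unitors have such simple formulas.

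First I would verify that $\o_H$ extends to a bifunctor $\!^{H}_{H}\mathfrak{M}^{H}_{H}\times \!^{H}_{H}\mathfrak{M}^{H}_{H}\lr \!^{H}_{H}\mathfrak{M}^{H}_{H}$. Given morphisms $f\colon (M,\z_M)\lr (M',\z_{M'})$ and $g\colon (N,\z_N)\lr (N',\z_{N'})$ in $\!^{H}_{H}\mathfrak{M}^{H}_{H}$, I would check that $f\o g$ restricts to a map $M\o_H N\lr M'\o_H N'$: indeed, if $m\o n\in M\o_H N$ then $f(m)\c h\o \z_{N'}(g(n))=f(m\c h)\o g(\z_N(n))=f(\z_M(m))\o g(h\c n)=\z_{M'}(f(m))\o h\c g(n)$, using right $H$-linearity of $f$, left $H$-linearity of $g$, and the fact that morphisms intertwine the structure maps. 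The resulting map is automatically bilinear and bicolinear on both sides because $f$ and $g$ are, and the formulas for the actions/coactions on $M\o_H N$ given in Proposition~\ref{P2.6} are built termwise from those of $M$ and $N$. Functoriality ($(\mathrm{id}\o\mathrm{id})=\mathrm{id}$ and compatibility with composition) is then evident.

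Second, the naturality of $\widetilde{a}$, $\widetilde{r}$ and $\widetilde{l}$ is immediate from the explicit formulas. For any morphisms $f,g,h$ in the category, the two composites in the naturality square of $\widetilde{a}$ both send $(m\o n)\o p$ to $f(m)\o (g(n)\o h(p))$; for $\widetilde{r}$, both composites send $m\o g$ to $f(m)\v(g)$; likewise for $\widetilde{l}$. By Propositions~\ref{P2.7} and \ref{P2.8}, each of $\widetilde{a}_{M,N,P}$, $\widetilde{r}_M$ and $\widetilde{l}_M$ is an isomorphism in $\!^{H}_{H}\mathfrak{M}^{H}_{H}$, so naturality upgrades to natural isomorphisms.

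Third, the pentagon and triangle axioms reduce to trivial rebracketing identities. Evaluating both sides of the pentagon on $((m\o n)\o p)\o q$ yields $m\o (n\o (p\o q))$ in either order, because every $\widetilde{a}$ involved merely moves parentheses. For the triangle, applying $(\widetilde{r}_M\o N)$ to $(m\o h)\o n$ gives $m\v(h)\o n$, while $(M\o \widetilde{l}_N)\circ \widetilde{a}_{M,H,N}$ sends it to $m\o \v(h)n$; these coincide in $M\o_H N$ since both equal $\v(h)(m\o n)$, the scalar $\v(h)\in\kk$ passing freely through the tensor.

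The only real obstacle is bookkeeping: one must verify carefully that the image of $f\o g$ lands in the subspace $M'\o_H N'$ and that the factored associator/unitors are defined on the quotient/subspace $\o_H$ rather than on $\o$. Once well-definedness is settled, the semi-strict form of $\widetilde{a}$, $\widetilde{r}$, $\widetilde{l}$ makes coherence entirely formal, so the result follows by combining the well-definedness check with Propositions~\ref{P2.6}--\ref{P2.8}.
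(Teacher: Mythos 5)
Your proposal is correct and follows essentially the same route as the paper, which simply invokes Propositions~\ref{P2.6}, \ref{P2.7} and \ref{P2.8} and declares the remaining monoidal axioms immediate; you have merely written out those routine verifications (bifunctoriality of $\o_{H}$ on morphisms, naturality of $\widetilde{a},\widetilde{r},\widetilde{l}$, and the pentagon and triangle identities) explicitly. The well-definedness check that $f\o g$ lands in $M'\o_{H}N'$ is exactly the right point to isolate, and your rebracketing argument for coherence is the intended one.
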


Now, we describe the second structure of a monoidal category, which can be view as a dual of the first one. Let $(M,\z_{M})$ and $(N,\z_{N})$ be two four-angle Hopf modules over $H$ with bijective structure map. The Hom-cotensor product $(M\B_{H}N, \z_{M}\o \z_{N})$ of $(M,\z_{M})$ and $(N,\z_{N})$ is defined by
\begin{align}\label{e2.5}
M\B_{H}N:=\{m\o n\in M\o N ~| ~\r^{r}(m)\o \z_{N}(n)=\z_{M}(m)\o \r^{l}(n)\}.
\end{align}
As a dual of the result of Proposition \ref{P2.9}, by Lemma 2.3 and 2.4, we can get the following consequence.
\begin{proposition}\label{P1}
Let $(H,\b)$ be a Hom-Hopf algebra such that $\b$ is bijective. Then $(~\!^{H}_{H}\mathfrak{M}^{H}_{H},$ $\B_{H},H,\widehat{a},\widehat{l}, \widehat{r})$ is a monoidal category with the following structures: for any $m\in M$ and $h, g\in H$:
\begin{enumerate}
\item[$(1)$] Its tensor structure is defined by
\begin{align*}
&h\c(m\B n)=\b^{-1 }(h_{1})\c m\o \b^{-1 }(h_{2})\c n,&& \r^{l}(m\B n)= m_{[-1]}\o m_{[0]}\o \z_{N}(n),
\\&(m\B n)\c h=m\c \b^{-1 }(h_{1})\o n\c \b^{-1 }(h_{2}),&& \r^{r}(m\B n)= \z_{M}(m)\o n_{(0)}\o n_{(1)}.
\end{align*}
\item[$(2)$] Its unit object is $(H,\b)$.
\item[$(3)$] Its associativity constraint is
\begin{align*}
\widehat{a}_{M,N,P}:(M\B_{H} N)\B_{H} P\ra M\B_{H} (N\B_{H} P),\quad \widehat{a}_{M,N,P}((m\o n)\o p)= m\o (n\o p).
\end{align*}
\item[$(4)$] Its left unit constraint is
\begin{align*}
\widehat{l}:H\B_{H} M\ra M,\quad \widehat{l}(g\o m)= \v(g)m.
\end{align*}
\item[$(5)$] Its right unit constraint is
\begin{align*}
\widehat{r}:M\B_{H} H\ra M,\quad \widehat{r}(m\o g)= \v(g)m.
\end{align*}
\end{enumerate}
\end{proposition}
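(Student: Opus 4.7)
The plan is to follow the blueprint laid down by Proposition~\ref{P2.9}, replacing the tensor product over $H$ by the cotensor product and swapping the roles of the structural maps (module actions on one factor, comodule coactions on both factors for the cotensor pairing). Concretely, Lemma~2.10 already builds the required left $H$-comodule structure on $M\B_H N$ through $\r^{l}(m\B n) = m_{[-1]}\o m_{[0]}\o \z_N(n)$ and a right $H$-comodule structure through $\r^{r}(m\B n) = \z_M(m)\o n_{(0)}\o n_{(1)}$; Lemma~2.11 supplies the associativity isomorphism $\widehat{a}$. So the main content left to prove is that the proposed left and right actions of $H$ on $M\B_H N$ are well defined, that $M\B_H N$ is simultaneously an $H$-bimodule and an $H$-bicomodule, and that it satisfies the four compatibility relations $(\ref{e2.1})$--$(\ref{e2.4})$, after which Proposition~\ref{P1} will follow exactly as Proposition~\ref{P2.9} did.

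First I would check that $h\c(m\B n) = \b^{-1}(h_1)\c m \o \b^{-1}(h_2)\c n$ lands in $M\B_H N$. Starting from $\r^{r}(m)\o \z_N(n) = \z_M(m)\o \r^{l}(n)$, one applies the Hopf compatibility relations $(\ref{e2.2})$ on $M$ and $(\ref{e2.1})$ on $N$, together with coassociativity and the Hom-multiplicativity of $\D$, to rewrite $(\b^{-1}(h_1)\c m)_{(0)}\o (\b^{-1}(h_1)\c m)_{(1)} \o \z_N(\b^{-1}(h_2)\c n)$ and show it equals $\z_M(\b^{-1}(h_1)\c m)\o (\b^{-1}(h_2)\c n)_{[-1]}\o (\b^{-1}(h_2)\c n)_{[0]}$; the key manipulation is the $\b$-shift identity $\b^{-1}(h_1)_{i}\b^{-1}(h_2)_{j}$-rebracketing that absorbs the $\b^{-1}$'s correctly. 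The right-action case is dual. The bimodule axiom $(\ref{e1.3})$ reduces on each factor to the bimodule axiom already holding in $M$ and $N$, provided one tracks the $\b^{-1}$'s carefully. Verification of the four Hopf compatibility conditions is then a direct check: for instance, $\r^{l}(h\c(m\B n))$ rewrites as $\b^{-1}(h_1)_{1} m_{[-1]}\o \b^{-1}(h_1)_{2}\c m_{[0]}\o \z_N(\b^{-1}(h_2)\c n)$, and two applications of Hom-coassociativity $\z(c_1)\o \D(c_2) = \D(c_1)\o \z(c_2)$ bring this to $h_1 m_{[-1]}\o h_2\c(m_{[0]}\B n_{[0]})$, matching $(\ref{e2.1})$ (note that the $n_{[-1]}$-component does not appear because the left $H$-coaction on $M\B_H N$ only uses the $M$-coaction).

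The associativity constraint $\widehat{a}_{M,N,P}$ comes from Lemma~2.11; one only needs to observe that the bijection of underlying spaces is automatically a morphism of $H$-bimodules (the left action uses $\D(h)$ on the first two tensorands and the right action on the last two, and Hom-coassociativity makes this associative up to the $\b^{-1}$'s), and that it is a morphism of $H$-bicomodules because the coactions are carried by the extremal factors. The unit maps $\widehat{l}(g\B m) = \v(g)m$ and $\widehat{r}(m\B g) = \v(g)m$ are isomorphisms with inverses $m \mapsto 1_H \B \z_M^{-1}(m)$ and $m \mapsto \z_M^{-1}(m) \B 1_H$ respectively; that they are well defined and bilinear/bicolinear is a short computation using $\D(1_H) = 1_H\o 1_H$ and the counit axioms, together with the observation that on $H\B_H M$ the cotensor relation $\D(g)\o \z_M(m) = \b(g)\o \r^{l}(m)$ forces $g\B m = \b^{-1}(g_1)\B (\v(g_2)\z_M^{-1}(\z_M(m)))$-type simplifications. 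Finally the pentagon and triangle axioms follow at the level of underlying vector spaces because $\widehat{a}$ is the evident reassociation and $\widehat{l},\widehat{r}$ are induced by the counit.

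The main obstacle I expect is the bookkeeping of $\b^{\pm 1}$ in the well-definedness proof for the actions on $M\B_H N$: one must repeatedly invoke Hom-coassociativity in the shifted form $\z(c_1)\o \D(c_2) = \D(c_1)\o \z(c_2)$ to move $\b^{-1}$'s across $\D(h)$, and without a consistent convention it is easy to end up one $\b$ off. Once this is handled cleanly (as in the proof of Proposition~\ref{P2.6}, but now reading the diagrams in the opposite direction), every remaining verification is formal and parallels the tensor-product case.
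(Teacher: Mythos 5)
Your overall route coincides with what the paper intends: the paper gives no computation for this proposition, asserting it as the dual of Proposition \ref{P2.9} combined with the two cotensor lemmas, and your proposal fills in exactly the checks that dualization requires. Most of it is sound: the well-definedness of $h\c(m\B n)=\b^{-1}(h_{1})\c m\o\b^{-1}(h_{2})\c n$ on $M\B_{H}N$ does follow by applying (\ref{e2.2}) in $M$, (\ref{e2.1}) in $N$, the defining relation (\ref{e2.5}), and the shifted Hom-coassociativity $\b^{-1}(h_{11})\o\b^{-1}(h_{12})\o h_{2}=h_{1}\o\b^{-1}(h_{21})\o\b^{-1}(h_{22})$, and the four compatibilities and the bimodule axiom go through as you describe (one harmless slip: the outcome of your (\ref{e2.1}) check should read $h_{1}m_{[-1]}\o h_{2}\c(m_{[0]}\B\z_{N}(n))$, not $m_{[0]}\B n_{[0]}$, since the left coaction never touches $n$).

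There is, however, one genuine error: your claimed inverses of the unit constraints, $m\m 1_{H}\B\z_{M}^{-1}(m)$ and $m\m\z_{M}^{-1}(m)\B 1_{H}$, do not map into the cotensor product at all. For $1_{H}\o\z_{M}^{-1}(m)$ to lie in $H\B_{H}M$, relation (\ref{e2.5}) forces $\D(1_{H})\o m=1_{H}\o\b^{-1}(m_{[-1]})\o\z_{M}^{-1}(m_{[0]})$, i.e. $\r^{l}(m)=1_{H}\o\z_{M}(m)$, which holds only for coinvariant $m$ (the condition defining $\!^{coH}M$ in Section 4). You transported the tensor-case inverse of Proposition \ref{P2.8} (insertion of the algebra unit) instead of dualizing it: the correct inverses are built from the coactions, $\widehat{l}^{-1}(m)=\b^{-1}(m_{[-1]})\o\z_{M}^{-1}(m_{[0]})$ and $\widehat{r}^{-1}(m)=\z_{M}^{-1}(m_{(0)})\o\b^{-1}(m_{(1)})$, which land in $H\B_{H}M$ resp. $M\B_{H}H$ precisely by Hom-coassociativity of the coactions. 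That $\widehat{l}^{-1}\circ\widehat{l}=id$ then follows by applying $\v\o H\o M$ to the defining relation $g_{1}\o g_{2}\o\z_{M}(m)=\b(g)\o m_{[-1]}\o m_{[0]}$ of $H\B_{H}M$, which yields $\v(g)\,m_{[-1]}\o m_{[0]}=\b(g)\o\z_{M}(m)$; note that your proposed rewriting $g\B m=\b^{-1}(g_{1})\B\v(g_{2})\z_{M}^{-1}(\z_{M}(m))$ is vacuous, since $\b^{-1}(g_{1})\v(g_{2})=g$, and cannot substitute for this identity. With the inverses corrected, the remainder of your plan goes through and matches the paper's intended argument.
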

\section{Yetter-Drinfel'd modules for Hom-bialgebras}
\def\theequation{3.\arabic{equation}}
\setcounter{equation} {0}
In this section, we first recall the definition of Yetter-Drinfel'd modules over Hom-bialgebras. Then, one will give a new structure of a monoidal category of Yetter-Drinfel'd modules, which is different from the two forms defined as in \cite{MP14}.
\begin{definition} (see \cite{MP14})
Let $(H,\b)$ be a Hom-bialgebra, $V$ a linear space and $\z_{V}:V\ra V$ a linear map. Then $(V,\z_{V})$ is called a right-right Yetter-Drinfel'd module over $H$ if
\begin{enumerate}
\item[$(i)$] $(V, \z_{V},\tl)$ is a right $H$-module;
\item[$(ii)$] $(V, \z_{V},\r)$ is a right $H$-comodule;
\item[$(iii)$] the following compatibility condition holds
\begin{align}\label{e3.1}
(v\tl h_{2})_{(0)}\o \b^{2}(h_{1})(v\tl h_{2})_{(1)}=v_{(0)}\tl \b(h_{1})\o \b(v_{(1)})\b^{2}(h_{2}),
\end{align}
\end{enumerate}
for any $h\in H$ and $v\in V$.
\end{definition}
\begin{remark}
$(1)$ The compatibility condition is very different from the one defining a right-right $H$-Hopf module.

$(2)$ If $(H,\b)$ be a Hom-Hopf algebra, then Eq.(\ref{e3.1}) is equivalent to the following condition:
\begin{align*}
(v\tl h)_{(0)}\o (v\tl h)_{(1)}=v_{(0)}\tl \b^{-1}(h_{21})\o S(h_{1})(\b^{-1}(v_{(1)})\b^{-2}(h_{22})).
\end{align*}
\end{remark}
Let $(H,\b)$ be a Hom-Hopf algebra such that $\b$ is bijective. We denote by $\mathcal{YD}^{H}_{H}$ the category whose objects are all right-right Yetter-Drinfel'd modules $(V,\z_{V})$ over $H$, with $\z_{V}$ bijective; the morphisms in the category are morphisms of right $H$-modules and right $H$-comodules.
\begin{example}\label{E3.3}
$(1)$
A Hom-Hopf algebra $(H,\b)$ with antipode $S$ can be considered as a right-right Yetter-Drinfel'd module over itself with the comultiplication $\D_{H}$ as a right $H$-comodule, with the structure
\begin{align*}
x\leftharpoonup h=S\b^{-1}(h_{1})(\b^{-1}(x)\b^{-2}(h_{2})),\quad \forall ~x, h\in H
\end{align*}
as a right $H$-module, and denote it by $H_{A}=(H, \leftharpoonup, \D_{H}, \b)$.

$(2)$
A Hom-Hopf algebra $(H,\b)$ with antipode $S$ can be considered as a right-right Yetter-Drinfel'd module over itself with the multiplication $m_{H}$ as a right $H$-module, with the structure
\begin{align*}
\r(h)=\b^{-1}(h_{12})\o S\b^{-2}(h_{11})\b^{-1}(h_{2})\quad \forall ~h\in H
\end{align*}
as a right $H$-comodule, and denote it by $H_{B}=(H, m_{H}, \r, \b)$.
\end{example}
In the following, let $(H,\b)$ be a Hom-Hopf algebra such that $\b$ is bijective. We will show that the category $\mathcal{YD}^{H}_{H}$ of right-right Yetter-Drinfel'd modules is a braided monoidal category in a new way.
\begin{lemma}\label{L3.3}
Let $(V,\z_{V})$ and $(W,\z_{W})$ be two Yetter-Drinfel'd modules over $H$. Then $(V\o W, \z_{V}\o \z_{W})\in \mathcal{YD}_{H}^{H}$ with the structures as follows:
\begin{align*}
(v\o w)\tl h&=v\tl \b^{-1}(h_{1})\o w\tl \b^{-1}(h_{2}),
\\\r^{r}(v\o w)=&v_{(0)}\o n_{(0)}\o \b^{-1}(v_{(1)}w_{(1)}),
\end{align*}
for any $h\in H$, $v\in V$ and $w\in W$.
\end{lemma}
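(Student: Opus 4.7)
The plan is to verify the three defining conditions of a right-right Yetter-Drinfel'd module for $(V\o W,\z_V\o\z_W)$ in order: the right $H$-module axioms, the right $H$-comodule axioms, and the compatibility (\ref{e3.1}).

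First, I would check that the diagonal action $(v\o w)\tl h = v\tl\b^{-1}(h_1)\o w\tl\b^{-1}(h_2)$ gives a right $H$-module structure. The only nontrivial axiom, $((v\o w)\tl h)\tl\b(k)=\z_{V\o W}(v\o w)\tl(hk)$, reduces after expansion to the right-module axioms for $V$ and $W$ together with the Hom-bialgebra identity $\D(hk)=h_1k_1\o h_2k_2$ and the fact that $\b^{-1}$ is an algebra morphism, so that $\b^{-1}(h_i)\b^{-1}(k_i)=\b^{-1}(h_ik_i)$. Unitality follows from $\D(1_H)=1_H\o 1_H$, and multiplicativity of $\z_V\o\z_W$ with respect to the action is immediate.

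Next, I would check that $\r^r(v\o w)=v_{(0)}\o w_{(0)}\o\b^{-1}(v_{(1)}w_{(1)})$ is a right $H$-coaction. The Hom-coassociativity axiom unpacks to
\[v_{(0)(0)}\o w_{(0)(0)}\o \b^{-1}(v_{(0)(1)}w_{(0)(1)})\o v_{(1)}w_{(1)}=\z_V(v_{(0)})\o\z_W(w_{(0)})\o\D(\b^{-1}(v_{(1)}w_{(1)})),\]
which follows from the coassociativity of the individual coactions on $V$ and $W$, the multiplicativity of $\D$, and the compatibility $\D\circ\b^{-1}=(\b^{-1}\o\b^{-1})\circ\D$ (since $\b^{-1}$ is a Hom-coalgebra morphism). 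The counit axiom reduces to $\v(v_{(1)}w_{(1)})=\v(v_{(1)})\v(w_{(1)})$.

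The main obstacle is the third step, verifying the compatibility (\ref{e3.1}) for $V\o W$. I would expand the left-hand side
\[((v\o w)\tl h_2)_{(0)}\o \b^2(h_1)\bigl((v\o w)\tl h_2\bigr)_{(1)}\]
using the diagonal action and the product coaction; this yields $(v\tl\b^{-1}(h_{21}))_{(0)}\o(w\tl\b^{-1}(h_{22}))_{(0)}$ in the $V\o W$-slot and $\b^2(h_1)\b^{-1}[(v\tl\b^{-1}(h_{21}))_{(1)}(w\tl\b^{-1}(h_{22}))_{(1)}]$ in the $H$-slot. The idea is then to rewrite the iterated coproduct $h_1\o h_{21}\o h_{22}$ via Hom-coassociativity so that the single factor $\b^2(h_1)$ can be split and distributed into shapes of the form $\b^2((\text{something})_1)$ for each of $V$ and $W$, exactly matching the individual YD conditions $(v\tl h'_2)_{(0)}\o \b^2(h'_1)(v\tl h'_2)_{(1)}=v_{(0)}\tl\b(h'_1)\o\b(v_{(1)})\b^2(h'_2)$. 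After applying YD to each factor and reassembling via repeated Hom-associativity in $H$ and the algebra-morphism property of $\b^{\pm 1}$, the expression collapses to $(v\o w)_{(0)}\tl\b(h_1)\o\b((v\o w)_{(1)})\b^2(h_2)$. The delicate part is the bookkeeping of the $\b^{\pm 1}$ twists: each rewriting must preserve the total degree of $\b$'s distributed across the tensor factors, and this is what makes the Hom-setting calculation substantially longer than its classical analogue.
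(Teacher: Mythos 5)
Your proposal is correct and follows essentially the same route as the paper: the paper also treats the right $H$-module and right $H$-comodule axioms as routine and devotes the proof to the compatibility condition (3.1), expanding the left-hand side via the diagonal action and product coaction, reshaping the iterated coproduct $h_1\o h_{21}\o h_{22}$ by Hom-coassociativity so that the Yetter--Drinfel'd condition can be applied first in the $V$-factor (with $h$ replaced by $\b^{-1}(h_1)$) and then in the $W$-factor, and reassembling with Hom-associativity and multiplicativity of $\b^{\pm 1}$. The only difference is that you spell out the module/comodule verifications the paper dismisses as easy; the $\b$-degree bookkeeping you flag as delicate is exactly what occupies the paper's displayed chain of equalities.
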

The proof is not hard and the interested readers can refer to the calculation of Proposition 3.5 in \cite{ZGW19}.
\begin{lemma}\label{L3.4}
Let $(u,\z_{U})$, $(V,\z_{V})$ and $(W,\z_{W})$ be three Yetter-Drinfel'd modules over $H$. Then the linear map
\begin{align*}
a_{U,V,W}:(U\o V)\o W\ra U\o (V\o W), \quad a_{U,V,W}((u\o v)\o w)=u\o (v\o w)
\end{align*}
is an isomorphism of right $(H,\b)$-modules and right $(H,\b)$-comodules.
\end{lemma}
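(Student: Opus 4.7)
The map $a_{U,V,W}$ is manifestly a bijection of underlying linear spaces whose inverse sends $u\o (v\o w)\m (u\o v)\o w$, and it intertwines the structure maps since both sides equal $\z_U(u)\o \z_V(v)\o \z_W(w)$. So the substance of the lemma is showing that $a_{U,V,W}$ respects the right $H$-action and right $H$-coaction defined on iterated tensor products via Lemma \ref{L3.3}. My plan is to compute $((u\o v)\o w)\tl h$ and $(u\o (v\o w))\tl h$ separately by applying Lemma \ref{L3.3} twice in each case, and then match the two expressions using Hom-coassociativity of $H$; the coaction is handled by a parallel computation using Hom-coassociativity of the comodule structures.

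For the module part I will iterate the formula from Lemma \ref{L3.3}. Using that $\b$ is bijective together with $\D\circ\b=(\b\o\b)\circ\D$, one has $\b^{-1}(h)_1\o \b^{-1}(h)_2=\b^{-1}(h_1)\o \b^{-1}(h_2)$, hence
\begin{align*}
((u\o v)\o w)\tl h
&=u\tl \b^{-2}(h_{11})\o v\tl \b^{-2}(h_{12})\o w\tl \b^{-1}(h_{2}),\\
(u\o (v\o w))\tl h
&=u\tl \b^{-1}(h_{1})\o v\tl \b^{-2}(h_{21})\o w\tl \b^{-2}(h_{22}).
\end{align*}
The Hom-coassociativity axiom $\z_{H}(h_{1})\o h_{21}\o h_{22}=h_{11}\o h_{12}\o \z_{H}(h_{2})$ becomes, after applying $\b^{-2}\o\b^{-2}\o\b^{-2}$ and rewriting, precisely the identity $\b^{-1}(h_{1})\o \b^{-2}(h_{21})\o \b^{-2}(h_{22})=\b^{-2}(h_{11})\o \b^{-2}(h_{12})\o \b^{-1}(h_{2})$, so the two expressions coincide and $a_{U,V,W}$ is a right $H$-module map.

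For the comodule part I will similarly unpack $\r^{r}$ iteratively. A direct expansion gives
\begin{align*}
\r^{r}((u\o v)\o w)
&=u_{(0)}\o v_{(0)}\o w_{(0)}\o \b^{-2}(u_{(1)}v_{(1)})\b^{-1}(w_{(1)}),\\
\r^{r}(u\o (v\o w))
&=u_{(0)}\o v_{(0)}\o w_{(0)}\o \b^{-1}(u_{(1)})\b^{-2}(v_{(1)}w_{(1)}),
\end{align*}
and these agree by Hom-associativity of $H$ applied to the three tags in $H$ (after the appropriate $\b^{-1}$ bookkeeping).

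The only real obstacle is the bookkeeping of $\b^{\pm 1}$ powers that appear when Lemma \ref{L3.3} is applied twice. Neither step requires using the Yetter-Drinfel'd compatibility \eqref{e3.1} itself; that condition ensures the iterated object again lies in $\mathcal{YD}^{H}_{H}$ (which is Lemma \ref{L3.3} applied to the pairs $(U,V\o W)$ and $(U\o V,W)$), whereas the present lemma is a purely structural associativity statement and reduces to Hom-coassociativity on the module side and Hom-associativity on the comodule side.
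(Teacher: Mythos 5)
Your proof is correct, and it is essentially the paper's (omitted) argument: the authors state only ``The proof is straightforward,'' and your direct verification---expanding the iterated action and coaction twice via Lemma \ref{L3.3}, then matching the two bracketings through Hom-coassociativity of $\D$ (after applying $\b^{-2}\o\b^{-2}\o\b^{-2}$) and Hom-associativity of the multiplication of $H$---is exactly the computation being left to the reader, with all $\b^{\pm1}$ bookkeeping done correctly. Your closing remark is also accurate: the Yetter--Drinfel'd condition \eqref{e3.1} is used only (via Lemma \ref{L3.3}) to ensure the iterated tensor products are again objects of $\mathcal{YD}^{H}_{H}$, not in verifying that $a_{U,V,W}$ is a module and comodule map.
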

The proof is straightforward.
\begin{theorem}\label{T3.5}
Let $(H,\b)$ be a Hom-Hopf algebra such that $\b$ is bijective. Then the category $\mathcal{YD}^{H}_{H}$ is a braided monoidal category, with tensor product $\o$ and associativity constraint $a$ defined as in Lemma \ref{L3.3} and \ref{L3.4}, respectively. Its braiding structure is defined by
\begin{align*}
c_{V,W}:V\o W\ra W\o V, \quad c_{V,W}(v\o w)=\z_{W}^{-1}(w_{(0)})\o \z_{V}^{-1}(v)\tl \b^{-2}(w_{(1)}),
\end{align*}
and the inverse
\begin{align*}
c_{V,W}^{-1}(w\o v)=\z^{-1}_{V}(v)\tl S^{-1}\b^{-2}(w_{(1)})\o  \z^{-1}_{W}(w_{(0)}),
\end{align*}
for any $v\in V$ and $w\in W$.
\end{theorem}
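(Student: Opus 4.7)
The monoidal structure on $\mathcal{YD}^{H}_{H}$ is already in place via Lemma \ref{L3.3} and Lemma \ref{L3.4} (together with the base field $\kk$ as unit object and the evident unit constraints, as in the classical case). What remains is the braiding. My plan is to verify, in order: (a) that $c_{V,W}$ is a morphism in $\mathcal{YD}^{H}_{H}$, (b) that the stated formula for $c_{V,W}^{-1}$ actually inverts $c_{V,W}$, (c) naturality in both variables, and (d) the two hexagon axioms (\ref{e1.1}) and (\ref{e1.2}).

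For (a), the right $H$-linearity of $c_{V,W}$ would be checked by computing $c_{V,W}((v\o w)\tl h)$: first expand the tensor-product action from Lemma \ref{L3.3}, then apply the Yetter-Drinfel'd compatibility (\ref{e3.1}) to the $W$-slot to pull the coaction past the action of the second half of $\D(h)$. Bijectivity of $\b$ and $\z_{W}$ absorbs the stray structure maps, yielding $c_{V,W}(v\o w)\tl h$. Right $H$-colinearity is a parallel computation: expand $\r^{r}(c_{V,W}(v\o w))$, invoke (\ref{e3.1}) on the $V$-slot this time, and collapse using Hom-coassociativity.

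For (b), the verification is pure antipode manipulation. Composing $c_{V,W}$ with the proposed $c_{V,W}^{-1}$ produces $v$ acted on by (appropriately $\b$-shifted) $S(w_{(1)1})w_{(1)2}$, and the antipode axiom $S(h_{1})h_{2}=\v(h)1_{H}$ together with the counit identity $\v(w_{(1)})w_{(0)}=\z_{W}(w)$ reduce this to the identity map on $V\o W$. The other composition is symmetric, using $h_{1}S(h_{2})=\v(h)1_{H}$. Naturality (c) is immediate: morphisms in $\mathcal{YD}^{H}_{H}$ commute with $\z$, with the action and with the coaction by hypothesis, so the two sides of the naturality square unfold to the same expression.

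The main obstacle is (d). Since $\mathcal{YD}^{H}_{H}$ is not strict, each hexagon contains two associators and their inverses, but $a$ is only the naive rebracketing and simply keeps track of which $\z$ is applied where; the identity therefore reduces to a combined module/comodule calculation. For (\ref{e1.1}), I would start from $c_{U,V\o W}(u\o(v\o w))$; after expanding the coaction of $V\o W$ (which produces the factor $\b^{-1}(v_{(1)}w_{(1)})$) and the resulting $H$-action on $u$ via $\D$ and Hom-associativity, the right-hand side of (\ref{e1.1}) should be reached after two successive partial braidings. The substantive identity making both sides agree is exactly (\ref{e3.1}) applied to $u$, combined with $\D(hg)=\D(h)\D(g)$ and the bijectivity of $\b$. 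Equation (\ref{e1.2}) is handled dually by expanding the action of $H$ on $U\o V$ through $\D$ and pushing the coaction through. I expect these computations to be lengthy but mechanical once the $\b$- and $\z$-shifts are carefully tracked; the only genuine input is the Yetter-Drinfel'd compatibility (\ref{e3.1}).
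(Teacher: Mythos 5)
Your overall route---direct verification that $c_{V,W}$ is a morphism in $\mathcal{YD}^{H}_{H}$, that the stated formula inverts it, naturality, and the two hexagons---is the same as the paper's, and your parts (a)--(c) are accurate: for linearity the paper indeed applies the compatibility (\ref{e3.1}) to the $W$-slot, and for colinearity to the $V$-slot, exactly as you describe. Your step (b) is actually an addition, since the paper never verifies the inverse at all; but note that the formula for $c_{V,W}^{-1}$ involves $S^{-1}$, so you are tacitly strengthening the hypothesis ``$\b$ bijective'' to include bijectivity of $S$, and the identities the two composites actually produce are the $S^{-1}$-twisted ones, $w_{(1)2}S^{-1}(w_{(1)1})=\v(w_{(1)})1_{H}$ and $S^{-1}(w_{(1)2})w_{(1)1}=\v(w_{(1)})1_{H}$ (obtained by applying the anti-homomorphism $S^{-1}$ to the antipode axioms), rather than $S(h_{1})h_{2}=\v(h)1_{H}$ verbatim.

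The one genuine misstep is in (d): the hexagons do not use the Yetter--Drinfel'd condition at all, and ``(\ref{e3.1}) applied to $u$'' cannot even be invoked in (\ref{e1.1}), because the coaction of $U$ never appears on either side. Expanding both sides of (\ref{e1.1}) on $(u\o v)\o w$ gives $\z_{V}^{-1}(v_{(0)})\o\z_{W}^{-1}(w_{(0)})\o\z_{U}^{-1}(u)\tl\b^{-3}(v_{(1)}w_{(1)})$ versus $\z_{V}^{-1}(v_{(0)})\o\z_{W}^{-1}(w_{(0)})\o\bigl(\z_{U}^{-2}(u)\tl\b^{-3}(v_{(1)})\bigr)\tl\b^{-2}(w_{(1)})$, and these agree by the right-module Hom-associativity $(m\tl a)\tl\b(b)=\z_{M}(m)\tl(ab)$ alone; dually, (\ref{e1.2}) reduces to the Hom-coassociativity of the coaction on $W$. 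This is precisely how the paper's computation runs---neither $\D(hg)=\D(h)\D(g)$ nor (\ref{e3.1}) enters. The Yetter--Drinfel'd condition is exhausted in your step (a): as in the classical theory, the formula $c_{V,W}(v\o w)=\z_{W}^{-1}(w_{(0)})\o\z_{V}^{-1}(v)\tl\b^{-2}(w_{(1)})$ satisfies the hexagons for any Hom-module-comodules, and (\ref{e3.1}) is exactly what makes $c_{V,W}$ a morphism of the category. Since you planned the hexagon verifications as mechanical expansions anyway, the proof survives once this is relabelled, but as written your roadmap for (d) directs you to an identity that does not apply, and insisting on it would stall the computation.
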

We can refer to the proof of Theorem 3.6 in \cite{ZGW19}.
\section{A category equivalence}
\def\theequation{4.\arabic{equation}}
\setcounter{equation} {0}
Let $(H,\b)$ be a Hom-Hopf algebra such that $\b$ is bijective, in this section, we will give the equivalence between the monoidal category $(~\!^{H}_{H}\mathfrak{M}^{H}_{H},\o_{H})$ or $(~\!^{H}_{H}\mathfrak{M}^{H}_{H},\B_{H})$, and the new monoidal category $\mathcal{YD}^{H}_{H}$ over $H$, which generalizes the main result in \cite{S94}.

\begin{lemma}\label{L2}
Let $V$ be a linear space and $\z_{V}:V\mapsto V$ a bijection. Endow the object $(H\o V, \b\o \z_{V})\in \!^{H}_{H}\mathfrak{M}$ with the structures given as in Example $\ref{E1}$ $(1)$. Then there is a bijection between
\begin{enumerate}
\item[$(1)$] a right $H$-comodule structures on $H\o V$ making $(H\o V, \b\o \z_{V})$ an object of $\!^{H}_{H}\mathfrak{M}^{H}$;
\item[$(2)$] a right $H$-comodule structures on $ V$ making $\e_{H}\o V:V\ra H\o V$ a morphism of right $H$-comodules.
\end{enumerate}
\end{lemma}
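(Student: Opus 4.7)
My plan is to mirror the proof of Lemma \ref{L1} by reversing the roles of algebras/modules and coalgebras/comodules throughout. The first step is to formulate a Hom-coalgebra analogue of Theorem \ref{T1}: for Hom-coalgebras $(C,\z_C)$ and $(D,\z_D)$ with bijective structure maps and a linear space $V$ with bijective $\z_V$, endow $C\o V$ with the canonical left $C$-comodule structure $\D_C\o \z_V$. Then right $D$-comodule structures on $C\o V$ making it a $(C,D)$-bicomodule correspond bijectively to linear maps $g:C\o V\lr V\o D$ satisfying conditions dual to $(i)$--$(iii)$ of Theorem \ref{T1} (coassociativity of $g$, compatibility with counits, and compatibility with $\z_C,\z_V,\z_D$). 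The forward direction sends a right coaction $\r$ to $g:=(\v_C\o V\o D)\circ\r$, and the backward direction reconstructs $\r$ from $g$ essentially as $\r=(C\o\z_V^{-1}\o \z_D^{-1})\circ(C\o g)\circ(\D_C\o \z_V)$. The proof is obtained from that of Theorem \ref{T1} by reversing arrows and presents no conceptual difficulty.

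Next, I specialize to $C=D=H$ and work inside the category $\!_H\mathfrak{M}$: that is, both the left $H$-module structure on $H\o V$ of Example \ref{E1}$(1)$ and the constructed right coaction must interact compatibly with the left $H$-action. This amounts to demanding that $g:H\o V\lr V\o H$ be left $H$-linear, where $V\o H$ carries the natural left module structure $h\c(v\o k)=\z_V(v)\o hk$. The specialized dual theorem then provides a bijection between:
\begin{enumerate}
\item[$(1')$] right $H$-comodule structures on $H\o V$ making $(H\o V,\b\o \z_V)\in \!^{H}_{H}\mathfrak{M}^{H}$;
\item[$(2')$] left $H$-linear maps $g:H\o V\lr V\o H$ satisfying the three dual conditions.
\end{enumerate}

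The third step uses the free left $H$-module property of $H\o V$: for any left $H$-module $W$, restriction along $\e_H\o V$ gives a bijection
\begin{align*}
F_W: \mathrm{Hom}_H(H\o V,W)\lr \mathrm{Hom}(V,W),\quad g\m g\circ(\e_H\o V),
\end{align*}
whose inverse extends a map $\r:V\lr W$ by left $H$-linearity, using that $\b$ is bijective to redistribute the Hom-twists. Taking $W=V\o H$ identifies the maps $g$ in $(2')$ with arbitrary linear maps $\r:V\lr V\o H$. The three dual axioms on $g$ now translate directly into the three axioms on $\r$: the dual of $(i)$ becomes coassociativity, the dual of $(ii)$ becomes the counit axiom, and the dual of $(iii)$ becomes the $\z$-compatibility $\r\circ \z_V=(\z_V\o \b)\circ \r$. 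Together these say exactly that $(V,\z_V,\r)$ is a right $H$-comodule. Finally, that $\e_H\o V:V\lr H\o V$ is a morphism of right $H$-comodules is immediate by evaluating the reconstructed right coaction on $1_H\o v$ and comparing with $\r$.

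The main obstacle is the careful tracking of Hom-structure maps throughout: each composition in the Hom setting must be twisted with the appropriate powers of $\b$ and $\z_V$, which is tedious but entirely mechanical and requires no new ideas beyond those already present in the proofs of Theorem \ref{T1} and Lemma \ref{L1}.
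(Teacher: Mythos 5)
Your proposal is correct in outline but takes a genuinely different route from the paper. The paper proves this lemma by direct construction: for $(2)\Rightarrow(1)$ it writes the explicit coaction $\r^{r}(h\o v)=h_{1}\o v_{(0)}\o\b^{-1}(h_{2}v_{(1)})$ and verifies coassociativity, the counit axiom and the compatibility condition (\ref{e2.2}) by hand; for $(1)\Rightarrow(2)$ it restricts along the unit, $\r=(\v\o V\o H)\r^{r}(\e\o V)$, and checks the comodule axioms directly. You instead dualize the machinery the paper used only for the companion Lemma \ref{L1}: a bicomodule analogue of the bimodule classification theorem (Theorem 4.1), followed by the freeness bijection $\mathrm{Hom}_{H}(H\o V,W)\cong\mathrm{Hom}(V,W)$, which is the precise dual of the paper's cofreeness map $E_{W}$. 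Your framework is consistent with the paper's: from the paper's coaction one gets $g(h\o v)=v_{(0)}\o h\b^{-1}(v_{(1)})$, and your reconstruction $\r^{r}=(H\o\z_{V}^{-1}\o\b^{-1})(H\o g)(\D\o\z_{V})$ expands to exactly the paper's formula. What your route buys is uniformity with Lemma \ref{L1} and the fact that the bicomodule compatibility between the left coaction $\D\o\z_{V}$ and the new right coaction comes packaged in the dual theorem rather than being checked ad hoc; what it costs is stating and proving that dual theorem, which the paper never develops, whereas the paper's direct computation is self-contained and also produces the explicit formula (\ref{e4.6}) reused later in the equivalence theorems. One twist in your step two needs correcting: with the conventions above, $g(h\c(g'\o v))=\z_{V}(v_{(0)})\o(hg')v_{(1)}=\z_{V}(v_{(0)})\o\b(h)\bigl(g'\b^{-1}(v_{(1)})\bigr)$ by Hom-associativity, so for the action $h\c(v\o k)=\z_{V}(v)\o hk$ that you propose on $V\o H$ the map $g$ is only twisted-linear, $g(h\c m)=\b(h)\c g(m)$; exact $H$-linearity holds for the adjusted action $h\c(v\o k)=\z_{V}(v)\o\b(h)k$ (both are legitimate Hom-module structures on $V\o H$). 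This is precisely the bookkeeping you flagged as delicate, and since $\b$ is bijective it does not affect the bijection, but the statement of your intermediate claim $(2')$ must carry the corrected twist for the argument to go through as written.
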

\begin{proof} $(2)\Rightarrow (1)$
If $V$ is a right $H$-comodule, for any $h \in H$ and $v\in V$, we define
\begin{align}\label{e4.6}
\r^{r}:H\o V\ra H\o V\o H, \r^{r}(h\o v)=h_{1}\o v_{(0)}\o \b^{-1}(h_{2}v_{(1)}),
\end{align}
for any $h,g\in H$ and $v\in V$.
By the definition of two-cosided $H$-Hopf module, we only need to prove that $(H\o V, \b\o \z_{V})$ is a right $H$-comodule and the object $(H\o V, \b\o \z_{V})\in \!_{H}\mathfrak{M}^{H}$. We first prove that $(H\o V, \b\o \z_{V})$ is a right $H$-comodule. For any $h \in H$ and $v\in V$, we have
\begin{align*}
(\r^{r}\o \b)\r^{r}(h\o v)&=(\r^{r}\o \b)(h_{1}\o v_{(0)}\o \b^{-1}(h_{2}v_{(1)}))
\\&=h_{11}\o v_{(0)(0)}\o \b^{-1}(h_{12}v_{(0)(1)})\o h_{2}v_{(1)}
\\&=\b(h_{1})\o \z_{V}(v_{(0)})\o \b^{-1}(h_{21}v_{(1)1})\o \b^{-1}(h_{22}v_{(1)2})
\\&=(\b\o \z_{V}\o \D)(h_{1}\o v_{(0)}\o \b^{-1}(h_{2}v_{(1)}))
\\&=(\b\o \z_{V}\o \D)\r^{r}(h\o v)
\end{align*}
It is easy to check that the equation $(H\o V\o \v)\r^{r}(h\o v)=\b(h)\o \z_{V}(v)$ holds.

Finally, we verify that the compatibility condition $(\ref{e2.2})$. For any $h,g \in H$ and $v\in V$, we have
\begin{align*}
\r^{r}(h\c (g\o v))&=\r^{r}(hg\o \z_{V}(v))
\\&=h_{1}g_{1}\o \z_{V}(v_{(0)})\o \b^{-1}(h_{2}g_{2})v_{(1)}
\\&=h_{1}\c(g_{1}\o v_{(0)})\o h_{2}\b^{-1}(g_{2}v_{(1)})
\\&=h_{1}\c(g\o v)_{(0)}\o h_{2}(g\o v)_{(1)}.
\end{align*}
$(1)\Rightarrow (2)$ If $(H\o V, \b\o \z_{V})\in \!^{H}_{H}\mathfrak{M}^{H}$ with the right $H$-comodule structure $\r^{r}:H\o V\ra H\o V\o H$. Then there is a unique right $H$-comodule structure on $V$ given by $\r=(\v\o V\o H)\r^{r}(\e\o V):V\ra V\o H$.

First, applying $(\e_{H}\o V\o H)$ to both sides of the equation above, we obtain
\begin{align*}
(\e_{H}\o V\o H)\r=(\e_{H}\o V\o H)(\v_{H}\o V\o H)\r^{r}(\e_{H}\o V)=\r^{r}(\e_{H}\o V).
\end{align*}
Thus $\e_{H}\o V$ is a morphism of right $H$-comodules.

Next, we prove that $V$ is a right $H$-comodule. For any $h,g\in H$, $m\in M$ and $n\in N$, we have
\begin{align*}
(\r\o \b)\r=&(\v_{H}\o V\o H\o H)(\r^{r}\o \b)(\e_{H}\o V\o H)(\v_{H}\o V\o H)\r^{r}(\e_{H}\o V)
\\&=(\v_{H}\o V\o H\o H)(\r^{r}\o \b)\r^{r}(\e_{H}\o V)
\\&=(\v_{H}\o V\o H\o H)(\b\o\z_{V}\o \D)\r^{r}(\e_{H}\o V)
\\&=(\z_{V}\o \D)(\v_{H}\o V\o H)\r^{r}(\e_{H}\o V)
\\&=(\z_{V}\o \D)\r
\end{align*}
and
\begin{align*}
(V\o \v_{H})\r&=(V\o \v_{H})(\v_{H}\o V\o H)\r^{r}(\e_{H}\o V)
\\&=(\v_{H}\o V)(H\o V\o \v_{H})\r^{r}(\e_{H}\o V)
\\&=(\v_{H}\o V)(\b\o \z_{V})(\e_{H}\o V)
\\&=\z_{V}.
\end{align*}

This completes the proof.
\end{proof}
The proof of Lemma \ref{L2} was given in the framework of general monoidal categories. Applying the lemma to the opposite category gives:
\begin{lemma}\label{L1}
Let $V$ be a linear space and $\z_{V}:V\mapsto V$ a bijection. Endow $(H\o V, \b\o \z_{V})\in \!^{H}_{H}\mathfrak{M}$ with the structures given as in Example $\ref{E1}$ $(1)$. Then there is a bijection between
\begin{enumerate}
\item[$(1)$] right $H$-module structures on $H\o V$ making $(H\o V, \b\o \z_{V})$ an object of $\!^{H}_{H}\mathfrak{M}_{H}$;
\item[$(2)$] right $H$-module structures on $ V$ making $\v\o V:H\o V\ra V$ a morphism of right $H$-modules.
\end{enumerate}
\end{lemma}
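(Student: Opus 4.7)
The plan is to derive an explicit formula for the right $H$-action on $H\o V$ in terms of a right $H$-action $\tl$ on $V$, by exploiting both the $H$-bimodule structure and the left-coaction compatibility that characterize $\!^{H}_{H}\mathfrak{M}_{H}$. I will show that every object of $\!^{H}_{H}\mathfrak{M}_{H}$ extending the canonical left-left Hopf module structure on $(H\o V,\b\o\z_V)$ from Example \ref{E1}$(1)$ must have right action of the form $(g\o v)\c h=g\b^{-1}(h_1)\o v\tl\b^{-1}(h_2)$ for a uniquely determined right $H$-module structure $\tl$ on $V$, and conversely every such $\tl$ reconstructs a valid structure via this formula; matching $\tl$ with the morphism property of $\v\o V$ is then automatic.

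For the direction $(1)\Ra(2)$, given a right $H$-action $\c$ on $H\o V$ realizing it as an object of $\!^{H}_{H}\mathfrak{M}_{H}$, I would set $v\tl h:=(\v\o V)((1_H\o v)\c h)$. The $\z_V$-compatibility $\z_V(v\tl h)=\z_V(v)\tl\b(h)$ follows because $\b\o\z_V$ intertwines $\c$ and $(\b\o\z_V)(1_H\o v)=1_H\o\z_V(v)$. To pin down $(1_H\o v)\c h$, I would apply the coaction compatibility $(\ref{e2.3})$ at $m=1_H\o v$; using $\D(1_H)=1_H\o 1_H$ and $1_H g=\b(g)$, this reduces to $((1_H\o v)\c h)_{[-1]}\o((1_H\o v)\c h)_{[0]}=\b(h_1)\o(1_H\o\z_V(v))\c h_2$. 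Applying $H\o\v\o V$ and the counit identity $\v(h_1)h_2=\b(h)$ then collapses the right-hand side to $\b(h_1)\o\z_V(v)\tl h_2$, yielding $(1_H\o v)\c h=h_1\o v\tl\b^{-1}(h_2)$. For general $g$, I would decompose $g\o v=\b^{-1}(g)\c(1_H\o\z_V^{-1}(v))$ and invoke the bimodule compatibility $(\ref{e1.3})$ to extend this to the advertised formula on all of $H\o V$. With the explicit formula in hand, the right Hom-module axioms for $\tl$ descend from those of $\c$ by applying $\v\o V$, and the morphism property $(\v\o V)((g\o v)\c h)=\v(g)(v\tl h)$ is a one-line counit calculation.

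For the direction $(2)\Ra(1)$, given a right $H$-action $\tl$ on $V$ I would use the formula above to define $\c$ on $H\o V$, and then check: the right Hom-module axioms for $\c$ (reducing to Hom-associativity of $m_H$ and $\tl$ together with multiplicativity and Hom-coassociativity of $\D$); the bimodule compatibility $(\ref{e1.3})$ with the canonical left action $h\c(g\o v)=hg\o\z_V(v)$; the coaction compatibility $(\ref{e2.3})$ with the canonical left coaction $\r^{l}(g\o v)=g_1\o g_2\o\z_V(v)$; and the morphism property of $\v\o V$, which is immediate from the formula. The two constructions are mutually inverse: starting from $\c$ and extracting $\tl$ via $\v\o V$ recovers the original $\c$ by the derived formula, while conversely applying $\v\o V$ to $(1_H\o v)\c h=h_1\o v\tl\b^{-1}(h_2)$ and using $\v(h_1)h_2=\b(h)$ returns the original $\tl$.

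The main obstacle is verifying the left-coaction compatibility $(\ref{e2.3})$ in the $(2)\Ra(1)$ direction, which demands careful expansion of $\D(g\b^{-1}(h_1))$ against the right action applied to $(g_2\o\z_V(v))\c h_2$, tracking the Hom-coassociativity twists of $\D$; all remaining verifications are routine Hom-algebra/coalgebra manipulations parallel to those already carried out in the proofs of Propositions $\ref{P2.6}$--$\ref{P2.8}$.
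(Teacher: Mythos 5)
Your proposal is correct, but it follows a more explicit route than the paper's. The paper proves this lemma by first invoking its generic reconstruction result (Theorem 4.1: right module structures making $A\o V$ an $A$-$B$-bimodule correspond bijectively to maps $f:V\o H\ra H\o V$ satisfying three Hom-conditions), noting that membership in $\!^{H}_{H}\mathfrak{M}_{H}$ makes $f$ left $H$-colinear, and then using the cofreeness bijection $E_{W}:Hom^{H}(W,H\o V)\ni f\m (\v\o V)f\in Hom(W,V)$ to translate the conditions on $f$ into the right Hom-module axioms for $\tl=(\v\o V)f$; in particular, associativity of $\tl$ is matched with the condition on $f$ abstractly, via injectivity of $E_{V\o H\o H}$, without ever writing the action on a general element $g\o v$. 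You instead derive the closed formula $(g\o v)\c h=g\b^{-1}(h_{1})\o v\tl \b^{-1}(h_{2})$ directly: compatibility $(\ref{e2.3})$ evaluated at $1_{H}\o v$ forces $(1_{H}\o v)\c h=h_{1}\o v\tl\b^{-1}(h_{2})$ (and your preliminary observation $\z_{V}(v\tl h)=\z_{V}(v)\tl\b(h)$, obtained from the Hom-module axiom for $\z_{M}$, is exactly what makes the $\b^{-1}$, $\z_{V}^{-1}$ bookkeeping close up), after which the decomposition $g\o v=\b^{-1}(g)\c(1_{H}\o \z_{V}^{-1}(v))$ together with the bimodule axiom $(\ref{e1.3})$ extends the formula to all of $H\o V$. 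Your plan for the converse direction is sound: I checked that condition $(\ref{e2.3})$ for the action so defined reduces precisely to Hom-coassociativity $h_{11}\o h_{12}\o \b(h_{2})=\b(h_{1})\o h_{21}\o h_{22}$, and $(\ref{e1.3})$ to Hom-associativity of $m_{H}$, so the step you flag as the main obstacle does go through, and the two constructions are mutually inverse exactly as you argue. What your route buys is the explicit action formula, which is precisely Eq.~$(\ref{e4.5})$ that the paper must write down anyway in the proof of the subsequent theorem, so your argument makes the bijection concrete at the cost of a few routine verifications; what the paper's route buys is reuse of Theorem 4.1 (the bimodule half of the check is done once, in general) and a cleaner handling of colinearity hidden inside the bijection $E$.
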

If $V$ is a right $H$-module. The induced right $H$-module structure on $H\o V$ are defined as follows, for any $h,g\in H$ and $v\in V$:
\begin{align}
(g\o v)\c h&=g\b^{-1}(h_{1})\o v\tl \b^{-1}(h_{2}). \label{e4.5}
\end{align}
The proof is analogous to that of Lemma \ref{L2} and is omitted for brevity.
\begin{theorem}
Let $V$ be a linear space and $\z_{V}:V\mapsto V$ a bijection. Endow the object $(H\o V, \b\o \z_{V})\in \!^{H}_{H}\mathfrak{M}$ with the structures given as in Example $\ref{E1}$ $(1)$. Then there is a bijection between
\begin{enumerate}
\item[$(1)$] a right $H$-module structure and a right $H$-comodule structure on $H\o V$
making $(H\o V,\b\o\z_{V})$ an object of $\!^{H}_{H}\mathfrak{M}^{H}_{H}$;
\item[$(2)$] a structure of right-right Yetter-Drinfel'd module on $V$.
\end{enumerate}
\end{theorem}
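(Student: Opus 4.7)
The plan is to combine Lemmas~\ref{L1} and \ref{L2}, which already handle the module and comodule parts separately, so that the theorem reduces to verifying exactly one remaining condition: the right-right Hopf module compatibility (Eq.~(\ref{e2.4})) on $H\o V$, and showing that it is equivalent to the right-right Yetter-Drinfel'd condition (Eq.~(\ref{e3.1})) on $V$.

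First, I would invoke Lemma~\ref{L1}: giving a right $H$-module structure on $H\o V$ that makes $(H\o V,\b\o\z_{V})$ an object of $\!^{H}_{H}\mathfrak{M}_{H}$ is equivalent to giving a right $H$-module structure $\tl$ on $V$. Tracing through the construction in the proof of Theorem~\ref{T1}, the transferred action takes the form
\begin{align*}
(h\o v)\c g = hg_{1}\o v\tl g_{2}
\end{align*}
(up to the $\b^{\pm 1}$ twists dictated by the Hom-axioms). Similarly, Lemma~\ref{L2} gives that a right $H$-comodule structure on $H\o V$ making it an object of $\!^{H}_{H}\mathfrak{M}^{H}$ is equivalent to a right $H$-comodule structure $\r^{V}$ on $V$, with coaction on $H\o V$ given explicitly by
\begin{align*}
\r^{r}(h\o v) = h_{1}\o v_{(0)}\o \b^{-1}(h_{2}v_{(1)}).
\end{align*}
Putting these two bijections together, the data in (1)---apart from the right-right compatibility---correspond bijectively to the data of a right $H$-module and right $H$-comodule structure on $V$.

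Next, I would verify that Eq.~(\ref{e2.4}) applied to $H\o V$ holds if and only if $V$ satisfies the right-right Yetter-Drinfel'd condition. For the direction $(1)\Rightarrow(2)$, I would specialize Eq.~(\ref{e2.4}) to the element $1_{H}\o v$ acted on by $h$, expand both sides using the explicit formulas for the transferred action and coaction, and then apply $\v\o V\o H$ to project from $H\o V\o H$ onto $V\o H$; after rearranging with the Hom-bialgebra axioms the surviving identity should coincide with Eq.~(\ref{e3.1}) (or, equivalently, the form given in Remark~3.2). For the direction $(2)\Rightarrow(1)$, I would directly expand both sides of Eq.~(\ref{e2.4}) on a generic $g\o v\in H\o V$, using multiplicativity of $\D$ and coassociativity with the Hom twists; the equality is then forced by the Yetter-Drinfel'd condition on $V$.

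The main obstacle will be the bookkeeping of $\b^{\pm 1}$ twists throughout this translation. The Yetter-Drinfel'd axiom carries a $\b^{2}$ on the second leg and mixes the $H$-action with multiplication in $H$, so matching it cleanly against Eq.~(\ref{e2.4}) will require repeated use of Hom-coassociativity $\z_{C}(c_{1})\o\D(c_{2})=\D(c_{1})\o\z_{C}(c_{2})$ together with $\D(gh)=g_{1}h_{1}\o g_{2}h_{2}$. Since Eq.~(\ref{e2.4}) involves no antipode, the most natural route is to stay with Eq.~(\ref{e3.1}) directly rather than the antipode-dependent reformulation of Remark~3.2; the correspondence should then be driven purely by the module, comodule, and Hom-bialgebra axioms.
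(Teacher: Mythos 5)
Your proposal follows essentially the same route as the paper's proof: both reduce via Lemmas~\ref{L1} and \ref{L2} to showing that the compatibility condition Eq.~(\ref{e2.4}) on $H\o V$, with the transferred action $(g\o v)\c h=g\b^{-1}(h_{1})\o v\tl\b^{-1}(h_{2})$ and coaction $\r^{r}(g\o v)=g_{1}\o v_{(0)}\o\b^{-1}(g_{2}v_{(1)})$, is equivalent to Eq.~(\ref{e3.1}) on $V$, proving one direction by direct expansion on a generic $g\o v$ and the other by specializing to $1_{H}\o v$ and projecting (the paper applies $\v\o V\o\b$ and then replaces $h$ by $\b(h)$, a twist detail your plan correctly anticipates under its acknowledged $\b^{\pm1}$ bookkeeping).
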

\begin{proof}
Based on Lemma \ref{L2} and \ref{L1}, we only need to prove that the condition on the right $H$-module structure and right $H$-comodule structure on $(V,\z_{V})$ that they define a right-right Yetter-Drinfel'd module is equivalent to the condition making $(H\o V,\b\o\z_{V})$ an object of $\mathfrak{M}^{H}_{H}$.

Let $(V,\z_{V},\tl)$ be a right $H$-module and $(V,\z_{V}, \r)$ a right $H$-comodule. The induced right $H$-module structure and right $H$-comodule structure on $H\o V$ are defined in (\ref{e4.5}) and (\ref{e4.6}), respectively.

Then for any $h,g\in H$ and $v\in V$, we have
\begin{align*}
\r^{r}((g\o v)\c h)&=\r^{r}(g\b^{-1}(h_{1})\o v\tl \b^{-1}(h_{2}))
\\&=g_{1}\b^{-1}(h_{11})\o (v\tl \b^{-1}(h_{2}))_{(0)}\o \b^{-1}[g_{2}\b^{-1}(h_{12})]\b^{-1} ((v\tl \b^{-1}(h_{2}))_{(1)})
\\&=g_{1}h_{1}\o (v\tl \b^{-2}(h_{22}))_{(0)}\o [\b^{-1}(g_{2})\b^{-2}(h_{21})]\b^{-1} ((v\tl \b^{-2}(h_{22}))_{(1)})
\\&=g_{1}h_{1}\o (v\tl \b^{-2}(h_{22}))_{(0)}\o g_{2}\b^{-2}[h_{21}(v\tl \b^{-2}(h_{22}))_{(1)}]
\end{align*}
and
\begin{align*}
(g\o v)_{(0)}&\c h_{1}\o (g\o v)_{(1)}h_{2}
\\&=(g_{1}\o v_{(0)})\c h_{1}\o \b^{-1} (g_{2}v_{(1)})h_{2}
\\&=g_{1}\b^{-1}(h_{11})\o v_{(0)}\tl \b^{-1}(h_{12})\o g_{2}[\b^{-1} (v_{(1)})\b^{-1}(h_{2})]
\\&=g_{1}h_{1}\o v_{(0)}\tl \b^{-1}(h_{21})\o g_{2}\b^{-2}[\b (v_{(1)})h_{22}],
\end{align*}
we easily see that these two terms are equal if $(V,\z_{V})$ is a right-right Yetter-Drinfel'd module. Thus $(H\o V,\b\o \z_{V})$ is a right-right $H$-Hopf module over $H$.

Conversely, assuming that $(H\o V,\b\o \z_{V})$ is an object of $\mathfrak{M}^{H}_{H}$, that is, for any $h\in H$ and $v\in V$, we have
\begin{align*}
\r^{r}((1\o v)\c h)=(1\o v)_{(0)}\c h_{1}\o (1\o v)_{(1)}h_{2},
\end{align*}
applying $\v\o V\o \b $ to both sides of the equation above, we get
\begin{align*}
v_{(0)}\tl &h_{1}\o \b (v_{(1)})\b (h_{2})
\\&=(\v\o V\o \b )(h_{11}\o v_{(0)}\tl \b^{-1}(h_{12})\o v_{(1)} h_{2})
\\&=(\v\o V\o \b )((1\o v)_{(0)}\c h_{1}\o (1\o v)_{(1)}h_{2})
\\&=(\v\o V\o \b )(\r^{r}((1\o v)\c h))
\\&=(\v\o V\o \b )(h_{11}\o (v\tl \b^{-1}(h_{2}))_{(0)}\o \b^{-1}(h_{12}(v\tl \b^{-1}(h_{2}))_{(1)}))
\\&=(v\tl \b^{-1}(h_{2}))_{(0)}\o \b(h_{1})(v\tl \b^{-1}(h_{2}))_{(1)},
\end{align*}
replacing $h$ by $\b(h)$, we obtain Eq.\ref{E1}. Thus $(V,\z_{V})$ is a right-right Yetter-Drinfel'd module over $H$.

This completes the proof.
\end{proof}
\begin{theorem}\label{T1}
Let $(H,\b)$ be a Hom-Hopf algebra with $\b$ bijective. Then the equivalence
\begin{align*}
\!^{H}_{H}\mathfrak{M}&\cong \mathfrak{C}
\\H\o V&\leftarrow V
\\ M&\ra \!^{coH}M
\end{align*}
induces equivalences of monoidal categories between
\begin{enumerate}
\item[$(1)$] the category $\!^{H}_{H}\mathfrak{M}_{H}$ of two-sided $H$-Hopf modules with Hom-tensor product $\o_{H}$ and the category of right $H$-modules,
\item[$(2)$] the category $\!^{H}_{H}\mathfrak{M}^{H}$ of two-cosided $H$-Hopf modules with Hom-cotensor product $\B_{H}$ and the category of right $H$-comodules,
\item[$(3)$] the category $\!^{H}_{H}\mathfrak{M}^{H}_{H}$ of four-angle Hopf modules with either $\o_{H}$ or $\B_{H}$ as product structure, and the category of right-right Yetter-Drinfel'd modules over $H$,
\end{enumerate}
where the right (co)module structures on $H\o V$ for $V$ a right (co)module is Eq.$(\ref{e4.5})$ and $(\ref{e4.6})$.
\end{theorem}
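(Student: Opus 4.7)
The plan is to establish the theorem in two layers: first the underlying (non-monoidal) equivalences of abelian categories, and then the compatibility with the tensor product in each of the three cases.

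For the underlying equivalences, I would begin with the fundamental theorem of Hom-Hopf modules, which provides $\!^{H}_{H}\mathfrak{M}\cong\mathfrak{C}$ via the quasi-inverse functors $M\mapsto M^{coH}:=\{m\in M\mid \r^{l}(m)=1_{H}\o \z_{M}^{-1}(m)\}$ and $V\mapsto H\o V$ with the structures of Example \ref{E1}(1). For part (1), I combine this with Lemma \ref{L1}: extra right $H$-module structures making $H\o V\in \!^{H}_{H}\mathfrak{M}_{H}$ are in bijection with right $H$-module structures on $V$. For part (2), Lemma \ref{L2} analogously handles the right $H$-comodule structures, yielding $\!^{H}_{H}\mathfrak{M}^{H}\cong \mathfrak{M}^{H}$. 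For part (3), I would invoke the theorem immediately preceding the statement, which combines Lemmas \ref{L1} and \ref{L2} into $\!^{H}_{H}\mathfrak{M}^{H}_{H}\cong \mathcal{YD}^{H}_{H}$.

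To upgrade to monoidal equivalences, I would compute the image of the tensor/cotensor product on objects of the form $H\o V,\, H\o W$. For case (1), using the formulas of Proposition \ref{P2.6} and the isomorphism type of Proposition \ref{P2.8}, I would establish
\[
(H\o V)\o_{H}(H\o W)\;\cong\; H\o (V\o W),\qquad g\o v\o h\o w\;\longmapsto\; \v(h)g\o v\o w,
\]
with $V\o W$ carrying the diagonal right $H$-module structure; the unit object $H$ on the Hopf module side matches $\Bbbk$ on the module side via $H\cong H\o\Bbbk$. For case (2), the analogous isomorphism $(H\o V)\B_{H}(H\o W)\cong H\o (V\o W)$ uses the $\v$ to collapse the middle tensorand on the other side; the right $H$-comodule structure on $V\o W$ then coincides with the diagonal comodule structure. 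For case (3), I would verify that each of these two isomorphisms transports the monoidal structure on four-angle Hopf modules to exactly the monoidal structure on $\mathcal{YD}^{H}_{H}$ from Lemma \ref{L3.3}, checking that the induced action and coaction on $V\o W$ agree with $(v\o w)\tl h = v\tl\b^{-1}(h_{1})\o w\tl \b^{-1}(h_{2})$ and $\r^{r}(v\o w)=v_{(0)}\o w_{(0)}\o \b^{-1}(v_{(1)}w_{(1)})$. The associativity and unit constraints from Proposition \ref{P2.9} and Proposition \ref{P1} then match those on the module side by direct inspection, since both are derived from the canonical rebracketing.

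The main obstacle I anticipate is the cotensor computation in parts (2) and (3): unlike $\o_{H}$, the cotensor $\B_{H}$ is defined by an equation involving $\z_{M}$ and $\r^{l},\r^{r}$, so showing $(H\o V)\B_{H}(H\o W)\cong H\o(V\o W)$ requires unwinding the defining equality $\r^{r}\o \z_{N}=\z_{M}\o \r^{l}$ on the explicit comodule structures of Lemma \ref{L2} and tracking the $\b$-twists so that the resulting coaction on $V\o W$ produces precisely $\b^{-1}(v_{(1)}w_{(1)})$ in the last slot. Once this isomorphism is secured and the diagonal YD structure is read off, the coherence of $\widetilde{a},\widetilde{l},\widetilde{r}$ (and $\widehat{a},\widehat{l},\widehat{r}$) with $a$ from Lemma \ref{L3.4} follows routinely, and morphisms are handled by naturality together with Theorem \ref{T1}, completing all three equivalences.
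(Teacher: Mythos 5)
Your high-level architecture does match the paper's: both proofs reduce the three equivalences to the structure theorem $\!^{H}_{H}\mathfrak{M}\cong\mathfrak{C}$ combined with Lemmas \ref{L1} and \ref{L2} (and, for case (3), the theorem immediately preceding the statement), and both then upgrade to monoidal equivalences by showing one quasi-inverse is a monoidal functor on the free objects $H\o V$. The problem is your key formula in the $\o_{H}$ case, which is wrong and sinks part (1) together with the $\o_{H}$ half of part (3).

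The isomorphism $(H\o V)\o_{H}(H\o W)\ra H\o(V\o W)$ is \emph{not} $g\o v\o h\o w\m \v(h)g\o v\o w$: the middle copy of $H$ must be absorbed into the first factor via the right $H$-action of Eq.\,$(\ref{e4.5})$, not killed by the counit. The paper's morphism is $\vp(g\o v\o_{H} h\o w)=(\b^{-1}(g)\o\z_{V}^{-1}(v))\c\b^{-1}(h)\o w=\b^{-1}(g)\b^{-2}(h_{1})\o\z_{V}^{-1}(v)\tl\b^{-2}(h_{2})\o w$, with inverse $h\o v\o w\m h\o v\o_{H}1\o w$. On $\o_{H}$ the relation $m\c k\o\z_{N}(n)=\z_{M}(m)\o k\c n$ holds (this is exactly how the proof of Proposition \ref{P2.8} rewrites $m\o g=\z^{-1}(m)\c\b^{-1}(g)\o 1$), and your assignment does not respect it: it sends the two sides to $\v(h)\,(g\b^{-1}(k_{1})\o v\tl\b^{-1}(k_{2}))\o\z_{W}(w)$ and $\v(k)\v(h)\,\b(g)\o\z_{V}(v)\o\z_{W}(w)$, which differ already in the classical case $\b=id$, $\z=id$ (take $k$ a nontrivial group-like: $gk\o v\tl k\o w\neq g\o v\o w$). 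Equivalently, under the subspace reading of $\o_{H}$ your map fails injectivity: for $H=\Bbbk G$ with $G$ finite and $V=W=\Bbbk$ trivial, every element $x_{c}=\sum_{a\in G}a\o a^{-1}c$ ($c\in G$) lies in $(H\o V)\o_{H}(H\o W)$, and your map sends all $x_{c}$ to the same $\sum_{a}a$, whereas $\vp(x_{c})=|G|\,c$ separates them. Moreover, even granting a bijection, transporting the right action $(m\o n)\c h=\z_{M}(m)\o n\c h$ of Proposition \ref{P2.6} along a counit-collapse would let $h$ act only on the $W$-leg, so it could never induce the diagonal action $(v\o w)\tl h=v\tl\b^{-1}(h_{1})\o w\tl\b^{-1}(h_{2})$ of Lemma \ref{L3.3}; it is precisely the $\tl$ built into $\vp$ that produces it. You have the duality reversed: the $\v$-collapse, with inverse given by the right coaction, is the correct mechanism for $\B_{H}$ (so your sketch of case (2) is right in spirit, and that is in fact the easier half, contrary to your anticipated obstacle), while $\o_{H}$ requires the action-collapse. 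A smaller slip: your coinvariants $\{m\mid\r^{l}(m)=1\o\z_{M}^{-1}(m)\}$ use the wrong normalization; since $\r^{l}(1\o v)=1\o 1\o\z_{V}(v)$ for the structures of Example \ref{E1}(1), the correct definition is $\r^{l}(m)=1\o\z_{M}(m)$, as in the paper.
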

\begin{proof}We define the subspace of $M$ by
\begin{align*}
\!^{coH}M=\{m\in M~|~\r^{l}(m)=1\o \z_{M}(m)\}.
\end{align*}
The right $H$-comodule structure on $\!^{coH}M$ for $M\in \!^{H}_{H}\mathfrak{M}^{H}$ is that of $\!^{coH}M$ as a right $H$-subcomodule of $M$. The right $H$-module structure on $\!^{coH}M$ for $M\in \!^{H}_{H}\mathfrak{M}_{H}$ is defined by $m'\tl h=S\b^{-1}(h_{1})\c(\z_{M}^{-1}(m')\c \b^{-2}(h_{2}))$, for any $h\in H$ and $m'\in \!^{coH}M$. We first check that the action is well defined. For any $h\in H$ and $m'\in \!^{coH}M$, we have
\begin{align*}
\r^{l}(m'\tl h)&=\r^{l}(\b^{-1}S(h_{1})\c (\z_{M}^{-1}(m')\c \b^{-2}(h_{2})))
\\&=\b^{-1 }S(h_{12})(\z_{M}^{-1}(m')\c \b^{-2}(h_{2}))_{[-1]}\o \b^{-1}S(h_{11})\c (\z_{M}^{-1}(m')\c \b^{-2}(h_{2}))_{[0]}
\\&=\b^{-1 }S(h_{12})(\z_{M}^{-1}(m')_{[-1]}\b^{ -2}(h_{21}))\o \b^{-1}S(h_{11})\c (\z_{M}^{-1}(m')_{[0]}\c \b^{-2}(h_{22}))
\\&=\b^{-1 }S(h_{12})\b^{ -1}(h_{21})\o \b^{-1}S(h_{11})\c (m'\c \b^{-2}(h_{22}))
\\&=\b^{-2}S(h_{211})\b^{ -2}(h_{212})\o  S(h_{1})\c (m'\c \b^{-2}(h_{22}))
\\&=1\o  S(h_{1})\c (m'\c \b^{-1}(h_{2}))
\\&=1\o \z_{M}(m'\tl h).
\end{align*}
It is easy to show that $(\!^{coH}M,\z_{M})\in \mathfrak{M}_{H}$.

Next, we only need to check the assertion that we have monoidal equivalences. To do this, it is enough to prove that one of the quasi-inverse equivalences is a monoidal functor in each case.

For $(1)$ we show that the isomorphism
\begin{align*}
\vp:(H\o V)\o_{H}(H\o W) &\ra H\o V\o W
\\ g\o v\o_{H} h\o w &\m(\b^{-1}(g)\o \z_{V}^{-1}(v))\c \b^{-1}(h)\o w
\\ h\o v\o_{H} 1\o w&\mapsfrom h\o v\o w
\end{align*}
is a morphism in the category $\!^{H}_{H}\mathfrak{M}_{H}^{H}$.
For left linearity and colinearity, computing we have
\begin{align*}
\vp[k\c (g\o v\o_{H} h\o w)]&=\vp[k\c (g\o v)\o_{H} \b(h)\o \z_{W}(w)]
\\&=\vp[kg\o \z_{V}(v)\o_{H} \b(h)\o \z_{W}(w)]
\\&=\b^{ -1}(kg)\b^{-1}(h_{1})\o v\tl \b^{-1}(h_{2})\o \z_{W}(w)
\\&=k[\b^{-1}(g)\b^{-2}(h_{1})]\o v\tl \b^{-1}(h_{2})\o \z_{W}(w)
\\&=k\c (\b^{-1}(g)\b^{-2}(h_{1})\o \z_{V}^{-1}(v)\tl \b^{-2}(h_{2})\o w)
\\&=k\c (\vp[g\o v\o_{H} h\o w])
\end{align*}
and
\begin{align*}
[\vp(g\o v&\o_{H} h\o w)]_{[-1]}\o [\vp(g\o v\o_{H} h\o w)]_{[0]}
\\&=[\b^{-1}(g)\b^{-2}(h_{1})\o \z_{V}^{-1}(v)\tl \b^{-2}(h_{2})\o w]_{[-1]}
\\&\quad\o [\b^{-1}(g)\b^{-2}(h_{1})\o \z_{V}^{-1}(v)\tl \b^{-2}(h_{2})\o w]_{[0]}
\\&=\b^{-1}(g_{1})\b^{ -2}(h_{11})\o \b^{-1}(g_{2})\b^{-2}(h_{12})\o v\tl \b^{-1}(h_{2})\o \z_{W}(w)
\\&=\b^{-1}(g_{1})\b^{ -1}(h_{1})\o \b^{-1}(g_{2})\b^{-2}(h_{21})\o v\tl \b^{-2}(h_{22})\o \z_{W}(w)
\\&=\b^{-1}(g_{1}h_{1})\o \vp[g_{2}\o \z_{V}(v)\o_{H} h_{2}\o \z_{W}(w)]
\\&=\b^{-1}((g\o v)_{[-1]}(h\o w)_{[-1]})\o \vp[(g\o v)_{[0]}\o_{H} (h\o w)_{[0]}]
\\&=(g\o v\o h\o w)_{[-1]} \o \vp[(g\o v\o_{H} h\o w)_{[0]}],
\end{align*}
for any $g,h\in H$, $v\in V$ and $w\in W$.
For right linearity, we have
\begin{align*}
\vp[(g\o v&\o_{H} h\o w)\c k]
\\&=\vp[\b(g)\o\z_{V}( v)\o_{H} (h\o w)\c k]
\\&=\vp[\b(g)\o\z_{V}( v)\o_{H} h\b^{-1}(k_{1})\o w\tl \b^{-1}(k_{2})]
\\&=g[\b^{-2}(h_{1})\b^{-3}(k_{11})]\o v\tl \b^{-2}(h_{2})\b^{-3}(k_{12})\o w \tl \b^{-1}(k_{2})
\\&=[\b^{-1}(g)\b^{-2}(h_{1})]\b^{-2}(k_{11})\o [\z_{V}^{-1}( v)\tl \b^{-2}(h_{2})]\tl \b^{-2}(k_{12}) \o w \tl \b^{-1}(k_{2})
\\&=[\b^{-1}(g)\b^{-2}(h_{1})]\b^{-1}(k_{1})\o [\z_{V}^{-1}( v)\tl \b^{-2}(h_{2})]\tl \b^{-2}(k_{21}) \o w \tl \b^{-2}(k_{22})
\\&=[\b^{-1}(g)\b^{-2}(h_{1})]\b^{-1}(k_{1})\o [\z_{V}^{-1}( v)\tl \b^{-2}(h_{2})\o w]\tl \b^{-1}(k_{2})
\\&=[\b^{-1}(g)\b^{-2}(h_{1})\o \z_{V}^{-1}( v)\tl \b^{-2}(h_{2})\o w]\c k
\\&=[\vp(g\o v\o_{H} h\o w)]\c k.
\end{align*}
Part $(2)$ is formally dual to $(1)$. We only deal with the half of $(3)$ involving $\o_{H}$ since the other half is dual to this. It remains to check that $\vp$ is right colinearity. For any $g,h\in H$, $v\in V$ and $w\in W$, we get
\begin{align*}
[\vp(g\o v&\o_{H} h\o w)]_{(0)}\o [\vp(g\o v\o_{H} h\o w)]_{(1)}
\\&=[\b^{-1}(g )\b^{-2}(h_{1})\o \z_{V}^{-1}(v )\tl \b^{-2}(h_{2})\o w]_{(0)}
\\&\quad\o [\b^{-1}(g )\b^{-2}(h_{1})\o \z_{V}^{-1}(v )\tl \b^{-2}(h_{2})\o w]_{(1)}
\\&=\b^{-1}(g_{1})\b^{-2}(h_{11})\o [\z_{V}^{-1}(v )\tl \b^{-2}(h_{2})\o w]_{(0)}
\\&\quad\o \b^{-1}[[\b^{-1}(g_{2})\b^{-2}(h_{12})][\z_{V}^{-1}(v )\tl \b^{-2}(h_{2})\o w]_{(1)}]
\\&=\b^{-1}(g_{1})\b^{-2}(h_{11})\o [\z_{V}^{-1}(v )\tl \b^{-2}(h_{2})]_{(0)}\o w_{(0)}
\\&\quad\o \b^{-1}[[\b^{-1}(g_{2})\b^{-2}(h_{12})]\b^{-1}[(\z_{V}^{-1}(v )\tl \b^{-2}(h_{2})_{(1)})w_{(1)}]]
\\&=\b^{-1}(g_{1})\b^{-2}(h_{11})\o [\z_{V}^{-1}(v )\tl \b^{-2}(h_{2})]_{(0)}\o w_{(0)}
\\&\quad\o [\b^{-2}(g_{2})\b^{-3}(h_{12})][\b^{-2}(\z_{V}^{-1}(v )\tl \b^{-2}(h_{2})_{(1)})\b^{-2}(w_{(1)})]
\\&=\b^{-1}(g_{1})\b^{-2}(h_{11})\o [\z_{V}^{-1}(v )\tl \b^{-2}(h_{2})]_{(0)}\o w_{(0)}
\\&\quad\o \b^{-1}(g_{2})[[\b^{-4}(h_{12})\b^{-3}(\z_{V}^{-1}(v )\tl \b^{-2}(h_{2})_{(1)})]\b^{-2}(w_{(1)})]
\\&=\b^{-1}(g_{1})\b^{-2}(h_{11})\o [\z_{V}^{-1}(v )\tl \b^{-2}(h_{2})]_{(0)}\o w_{(0)}
\\&\quad\o \b^{-1}(g_{2})[\b^{-3}[\b^{-1}(h_{12})(\z_{V}^{-1}(v )\tl \b^{-2}(h_{2})_{(1)})]\b^{-2}(w_{(1)})]
\\&=\b^{-1}(g_{1})\b^{-1}(h_{1})\o [\z_{V}^{-1}(v )\tl \b^{-3}(h_{22})]_{(0)}\o w_{(0)}
\\&\quad\o \b^{-1}(g_{2})[\b^{-3}[\b^{-1}(h_{21})(\z_{V}^{-1}(v )\tl \b^{-3}(h_{22})_{(1)})]\b^{-2}(w_{(1)})]
\\&=\b^{-1}(g_{1})\b^{-1}(h_{1})\o [\z_{V}^{-1}(v_{(0)} )\tl \b^{-2}(h_{21})]\o w_{(0)}
\\&\quad\o \b^{-1}(g_{2})[\b^{-3}[v_{(1)}\b^{-1}(h_{22})]\b^{-2}(w_{(1)})]
\\&=\b^{-1}(g_{1})\b^{-2}(h_{11})\o \z_{V}^{-1}(v_{(0)})\tl \b^{-2}(h_{12})\o w_{(0)}
\\&\quad\o  \b^{-1}(g_{2})[[\b^{-3}(v_{(1)})\b^{-3}(h_{2})]\b^{-2}(w_{(1)})]
\\&=\b^{-1}(g_{1})\b^{-2}(h_{11})\o \z_{V}^{-1}(v_{(0)})\tl \b^{-2}(h_{12})\o w_{(0)}
\\&\quad\o  \b^{-1}(g_{2})[[\b^{-3}(v_{(1)})\b^{-3}(h_{2})]\b^{-2}(w_{(1)})]
\\&=\b^{-1}(g_{1})\b^{-2}(h_{11})\o \z_{V}^{-1}(v_{(0)})\tl \b^{-2}(h_{12})\o w_{(0)}
\o  \b^{-2}(g_{2}v_{(1)})\b^{-2}(h_{2}w_{(1)})
\\&=\vp[g_{1}\o v_{(0)}\o_{H} h_{1}\o w_{(0)}]\o \b^{-1}[\b^{-1}(g_{2}v_{(1)})\b^{-1}(h_{2}w_{(1)})]
\\&=\vp[(g\o v)_{(0)}\o_{H} (h\o w)_{(0)}]\o \b^{-1}[(g\o v)_{(1)}(h\o w)_{(1)}]
\\&=\vp[(g\o v\o_{H} h\o w)_{(0)}]\o (g\o v\o_{H} h\o w)_{(1)}.
\end{align*}
\begin{align*}
[\vp(g\o v&\o_{H} h\o w)]_{(0)}\o [\vp(g\o v\o_{H} h\o w)]_{(1)}
\\&=[\b^{-1}(g )\b^{-2}(h_{1})\o \z_{V}^{-1}(v )\tl \b^{-2}(h_{2})\o w]_{(0)}
\\&\quad\o [\b^{-1}(g )\b^{-2}(h_{1})\o \z_{V}^{-1}(v )\tl \b^{-2}(h_{2})\o w]_{(1)}
\\&=\b^{-1}(g_{1})\b^{-2}(h_{11})\o [\z_{V}^{-1}(v )\tl \b^{-2}(h_{2})\o w]_{(0)}
\\&\quad\o \b^{-1}[[\b^{-1}(g_{2})\b^{-2}(h_{12})][\z_{V}^{-1}(v )\tl \b^{-2}(h_{2})\o w]_{(1)}]
\\&=\b^{-1}(g_{1})\b^{-2}(h_{11})\o [\z_{V}^{-1}(v )\tl \b^{-2}(h_{2})]_{(0)}\o w_{(0)}
\\&\quad\o \b^{-1}[[\b^{-1}(g_{2})\b^{-2}(h_{12})]\b^{-1}[(\z_{V}^{-1}(v )\tl \b^{-2}(h_{2})_{(1)})w_{(1)}]]
\\&=\b^{-1}(g_{1})\b^{-2}(h_{11})\o [\z_{V}^{-1}(v )\tl \b^{-2}(h_{2})]_{(0)}\o w_{(0)}
\\&\quad\o [\b^{-2}(g_{2})\b^{-3}(h_{12})][\b^{-2}(\z_{V}^{-1}(v )\tl \b^{-2}(h_{2})_{(1)})\b^{-2}(w_{(1)})]
\\&=\b^{-1}(g_{1})\b^{-2}(h_{11})\o [\z_{V}^{-1}(v )\tl \b^{-2}(h_{2})]_{(0)}\o w_{(0)}
\\&\quad\o \b^{-1}(g_{2})[[\b^{-4}(h_{12})\b^{-3}(\z_{V}^{-1}(v )\tl \b^{-2}(h_{2})_{(1)})]\b^{-2}(w_{(1)})]
\\&=\b^{-1}(g_{1})\b^{-2}(h_{11})\o [\z_{V}^{-1}(v )\tl \b^{-2}(h_{2})]_{(0)}\o w_{(0)}
\\&\quad\o \b^{-1}(g_{2})[\b^{-3}[\b^{-1}(h_{12})(\z_{V}^{-1}(v )\tl \b^{-2}(h_{2})_{(1)})]\b^{-2}(w_{(1)})]
\\&=\b^{-1}(g_{1})\b^{-1}(h_{1})\o [\z_{V}^{-1}(v )\tl \b^{-3}(h_{22})]_{(0)}\o w_{(0)}
\\&\quad\o \b^{-1}(g_{2})[\b^{-3}[\b^{-1}(h_{21})(\z_{V}^{-1}(v )\tl \b^{-3}(h_{22})_{(1)})]\b^{-2}(w_{(1)})]
\\&=\b^{-1}(g_{1})\b^{-1}(h_{1})\o [\z_{V}^{-1}(v_{(0)} )\tl \b^{-2}(h_{21})]\o w_{(0)}
\\&\quad\o \b^{-1}(g_{2})[\b^{-3}[v_{(1)}\b^{-1}(h_{22})]\b^{-2}(w_{(1)})]
\\&=\b^{-1}(g_{1})\b^{-2}(h_{11})\o \z_{V}^{-1}(v_{(0)})\tl \b^{-2}(h_{12})\o w_{(0)}
\\&\quad\o  \b^{-1}(g_{2})[[\b^{-3}(v_{(1)})\b^{-3}(h_{2})]\b^{-2}(w_{(1)})]
\\&=\b^{-1}(g_{1})\b^{-2}(h_{11})\o \z_{V}^{-1}(v_{(0)})\tl \b^{-2}(h_{12})\o w_{(0)}
\\&\quad\o  \b^{-1}(g_{2})[[\b^{-3}(v_{(1)})\b^{-3}(h_{2})]\b^{-2}(w_{(1)})]
\\&=\b^{-1}(g_{1})\b^{-2}(h_{11})\o \z_{V}^{-1}(v_{(0)})\tl \b^{-2}(h_{12})\o w_{(0)}
\o  \b^{-2}(g_{2}v_{(1)})\b^{-2}(h_{2}w_{(1)})
\\&=\vp[g_{1}\o v_{(0)}\o_{H} h_{1}\o w_{(0)}]\o \b^{-1}[\b^{-1}(g_{2}v_{(1)})\b^{-1}(h_{2}w_{(1)})]
\\&=\vp[(g\o v)_{(0)}\o_{H} (h\o w)_{(0)}]\o \b^{-1}[(g\o v)_{(1)}(h\o w)_{(1)}]
\\&=\vp[(g\o v\o_{H} h\o w)_{(0)}]\o (g\o v\o_{H} h\o w)_{(1)}.
\end{align*}
Finally, the coherence condition on monoidal functors follows from the fact that both ways around the rectangle
$$\xymatrix{
  (H\o U)\o_{H}(H\o V)\o_{H}(H\o W) \ar[d]_{id\o_{H}\vp} \ar[r]^-{id\o_{H}\vp}      & (H\o U)\o_{H}(H\o V\o W)\ar[d]^{\vp}  \\
  (H\o U\o V)\o_{H}(H\o W)  \ar[r]_-{\vp}               & H\o U\o V\o W            }
  $$
  are given by $h\o u\o g\o v\o f\o w\ra \b^{-1}(h)(\b^{-3}(g_{1})\b^{-4}(f_{11}))\o \z_{U}^{-1}(u)\tl \b^{-3}(g_{2})\b^{-4}(f_{12})\o \z_{V}^{-1}(v)\tl \b^{-2}(f_{2})\o w$.

This completes the proof.
\end{proof}
\begin{example}
Let $H_{4}=sp\{1,g, x,gx\}$ be a vector space over $\Bbbk$ with char $\Bbbk\neq 2$ satisfying the following relation:
\begin{align*}
g^{2}=1,x^{2}=0,xg=-gx.
\end{align*}

For all $0\neq k\in \Bbbk$, define the Hom-Hopf algebra structure on $H_{4}$ as follows:
\begin{enumerate}
\item[$\bullet$] The multiplication $\circ$ is given by:
\begin{align*}
\begin{tabular}{c|c c c c}
	       H&1&g&x&gx\\
	\hline 1&1&g&kx&kgx\\
	       g&g&1&kgx&kx\\
	       x&kx&$-$kgx&0&0\\
           gx&kgx&$-$kx&0&0\\
\end{tabular}
\end{align*}
\item[$\bullet$] The comultiplication $\D$, counit $\v$ and antipode $S$ are given by:
\begin{align*}
&\D(1)=1\o 1, \quad\D(g)=g\o g, \quad \D(x)=kx\o g+1\o kx,\quad \D(gx)=kgx\o 1+g\o kgx,
\\&\v(1)=1_{\Bbbk},\quad\v(g)=1_{\Bbbk},\quad \v(x)=0, \quad\v(gx)=0,
\\&S(1)=1,\quad S(g)=g,\quad S(x)=gx, \quad S(gx)=-x.
\end{align*}
\end{enumerate}
The automorphism $\b: H_{4}\ra H_{4}$ is given by
\begin{align*}
&\b(1)=1, \quad \b(g)=g, \quad \b(x)=kx, \quad \b(gx)=kgx.
\end{align*}
Let $V=sp\{1_{V}, z\}$ over $\Bbbk$ with char $\Bbbk\neq 2$ and define the automorphism $\z_{V}: V\ra V$ by $\z_{V}(1)=1, \z_{V}(z)=kz$.
Define the action $\tl:V\o H_{4}\ra V$ by
\begin{align*}
&1_{V}\tl 1=1_{V},\quad 1_{V}\tl g=1_{V},\quad 1_{V}\tl x=0,\quad 1_{V}\tl y=0,
\\&z\tl 1=kz,\quad z\tl g=-kz,\quad z\tl x=0,\quad z\tl y=0.
\end{align*}
Define the coaction $\r: V\ra V\o H_{4}$ by
\begin{align*}
&\r(1_{V})=1_{V}\o 1,\quad \r(z)=kz\o g+1_{V}\o kx.
\end{align*}
Then $(V, \z_{V})$ is a right-right Yetter-Drinfel'd module,
where $0\neq k\in \Bbbk$. Then one can check that $(H_{4}\o V, \b\o \z_{V})$ is a four-angle Hopf module with the following structures:
\begin{enumerate}
\item[$(i)$] the left module structures:
\begin{align*}
\begin{tabular}{c|c c c c}
	\hline $\c$&$1\o 1_{V}$&$g\o 1_{V}$&$x\o 1_{V}$&$gx\o 1_{V}$\\
	\hline 1&$1\o 1_{V}$&$g\o 1_{V}$&$kx\o 1_{V}$&$kgx\o 1_{V}$\\
	       g&$g\o 1_{V}$&$1\o 1_{V}$&$kgx\o 1_{V}$&$kx\o 1_{V}$\\
	       x&$kx\o 1_{V}$&$-kgx\o 1_{V}$&0&0\\
           gx&$kgx\o 1_{V}$&$-kx\o 1_{V}$&0&0\\
    \hline $\c$&$1\o z$&$g\o z$&$x\o z$&$y\o z$\\
	\hline 1&$1\o kz$&$g\o kz$&$kx\o kz$&$kgx\o kz$\\
	       g&$g\o kz$&$1\o kz$&$kgx\o kz$&$kx\o kz$\\
	       x&$kx\o kz$&$-kgx\o kz$&0&0\\
           gx&$kgx\o kz$&$-kx\o kz$&0&0\\
\end{tabular}
\end{align*}
\item[$(ii)$] the right module structures:
\begin{align*}
\begin{tabular}{c c c c|c}
	\hline $1\o 1_{V}$&$g\o 1_{V}$&$x\o 1_{V}$&$gx\o 1_{V}$&$\c$\\
	\hline $1\o 1_{V}$&$g\o 1_{V}$&$kx\o 1_{V}$&$kgx\o 1_{V}$&1\\
	       $g\o 1_{V}$&$1\o 1_{V}$&$-kgx\o 1_{V}$&$-kx\o 1_{V}$&g\\
	       $kx\o 1_{V}$&$kgx\o 1_{V}$&0&0&x\\
           $kgx\o 1_{V}$&$kx\o 1_{V}$&0&0&gx\\
    \hline $1\o z$&$g\o z$&$x\o z$&$gx\o z$&$\c$\\
	\hline $1\o kz$&$g\o kz$&$kx\o kz$&$kgx\o kz$&1\\
	       $g\o -kz$&$1\o -kz$&$kgx\o kz$&$kx\o kz$&g\\
	       $kx\o -kz$&$kgx\o -kz$&0&0&x\\
           $kgx\o kz$&$kx\o kz$&0&0&gx\\
\end{tabular}
\end{align*}
\item[$(iii)$] the left comodule structures:
\begin{align*}
&\r^{l}(1\o 1_{V})=1\o 1\o 1_{V},\quad \r^{l}(1\o z)=1\o 1\o kz,
\\&\r^{l}(g\o 1_{V})=g\o g\o 1_{V},\quad \r^{l}(g\o z)=g\o g\o kz,
\\&\r^{l}(x\o 1_{V})=(kx\o g+1\o kx)\o 1_{V},\quad \r^{l}(x\o z)=(kx\o g+1\o kx)\o kz,
\\&\r^{l}(gx\o 1_{V})=(kgx\o 1+g\o kgx)\o 1_{V},,\quad \r^{l}(gx\o z)=(kgx\o 1+g\o kgx)\o kz.
\end{align*}
\item[$(iv)$] the right comodule structures:
\begin{align*}
&\r^{r}(1\o 1_{V})=1\o 1_{V}\o 1,\quad \r^{r}(1\o z)=1\o kz\o g+1\o 1_{V}\o kx,
\\&\r^{r}(g\o 1_{V})=g\o 1_{V}\o g,\quad \r^{r}(g\o z)=g\o kz\o 1+g\o 1_{V}\o kgx,
\\&\r^{r}(x\o 1_{V})=kx\o 1_{V}\o g+1\o 1\o kx,
\\&\r^{r}(x\o z)=kx\o kz\o 1+kx\o 1\o kgx+1\o kz\o -kgx,
\\&\r^{r}(gx\o 1_{V})=kgx\o 1_{V}\o 1+g\o 1\o kgx,
\\&\r^{r}(gx\o z)=kgx\o kz\o g+kgx\o 1\o kx+g\o kz\o -kx,
\end{align*}
\end{enumerate}
We define
$\!^{coH}(H_{4}\o V)=\{h\o v|h_{1}\o h_{2}\o \z(v)=1\o h\o \z(v)\}$. That is
\begin{align*}
&\!^{coH}(H_{4}\o V)=\{1\o 1_{V}, 1\o z\}.
\end{align*}
Then it is not hard to check that $\!^{coH}(H_{4}\o V)$ and $V$ is an isomorphism of Yetter-Drinfel'd modules.

\end{example}
\begin{corollary}
Let $(H,\b)$ be a Hom-Hopf algebra such that $\b$ is bijective. Then the identity functor is a monoidal equivalence
\begin{align*}
(~\mathfrak{ID},\xi~): (~\!^{H}_{H}\mathfrak{M}\!^{H}_{H},\B_{H})\ra (~\!^{H}_{H}\mathfrak{M}\!^{H}_{H},\o_{H}),
\end{align*}
where the isomorphisms $\xi:M\o_{H} N\ra M\B_{H} N$ satisfy $\xi (m\o n)=\z_{M}^{-2}(m_{(0)})\c \b^{-2}(n_{[-1]})\o \b^{-2}(m_{(1)})\c \z_{N}^{-2}(n_{[0]})$, for any $m\in M$ and $n\in N$.
\end{corollary}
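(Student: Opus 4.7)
The plan is to leverage Theorem 4.5 as a black box rather than starting from scratch. That theorem gives two monoidal equivalences with the same intermediate category:
\begin{align*}
(\!^{H}_{H}\mathfrak{M}^{H}_{H},\o_{H})\simeq \mathcal{YD}^{H}_{H}\simeq (\!^{H}_{H}\mathfrak{M}^{H}_{H},\B_{H}),
\end{align*}
and on objects both equivalences send $V\in\mathcal{YD}^{H}_{H}$ to $H\o V$ and send $M\in\!^{H}_{H}\mathfrak{M}^{H}_{H}$ to $\!^{coH}M$. Composing one with a quasi-inverse of the other produces a monoidal equivalence whose underlying functor is the identity on $\!^{H}_{H}\mathfrak{M}^{H}_{H}$, so the equivalence collapses to a single natural monoidal structure $\xi$ comparing the two tensor products. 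Thus the theorem is formally immediate from Theorem 4.5; what remains is to identify $\xi$ with the concrete formula stated and to check it has the required properties.

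To extract the formula, I would first apply the two equivalences to objects of the form $M=H\o V$ and $N=H\o W$ for $V,W\in\mathcal{YD}^{H}_{H}$, where both $M\o_{H}N$ and $M\B_{H}N$ are canonically isomorphic to $H\o V\o W$ via the $\vp$ and its cotensor analogue from Theorem 4.5. The induced comparison on $H\o V\o W$ is forced, and pulling it back through $\vp$ yields the displayed formula for $\xi$. To write it in the form given in the statement, I would express $m_{(0)}\c \b^{-2}(n_{[-1]})$ and $\b^{-2}(m_{(1)})\c \z_N^{-2}(n_{[0]})$ in the model $M=H\o V$, $N=H\o W$ and see that they recover precisely the formula coming from the composition.

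Next, I would verify the formula directly, in four short steps. \textbf{Step 1}: Check that $\xi$ descends to $M\o_{H}N$: apply the equalising relation $m\c h\o\z_{N}(n)=\z_{M}(m)\o h\c n$ to one of the slots and use the four $H$-Hopf module compatibilities (Eqs.~\eqref{e2.1}--\eqref{e2.4}) to absorb the $h$. \textbf{Step 2}: Check that the image lies in $M\B_{H}N$, i.e.~that $\xi(m\o n)$ satisfies the defining cotensor equation from \eqref{e2.5}; this is where the double use of $\b^{-2}$ and $\z^{-2}$ is needed to balance the Hom-coassociativity twists. \textbf{Step 3}: Check $\xi$ is a map of left and right $H$-modules and of left and right $H$-comodules, using the $(H,\b)$-bimodule/bicomodule axiom together with each of Eqs.~\eqref{e2.1}--\eqref{e2.4}. \textbf{Step 4}: Construct the inverse by the obvious ``swap'' formula and verify $\xi^{-1}\xi=\mathrm{id}=\xi\xi^{-1}$ using the antipode axioms and the Hom-coassociativity.

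Finally, monoidality of $(\mathrm{ID},\xi)$ means checking (a) that $\xi$ is compatible with the two associativity constraints $\widetilde{a}$ and $\widehat{a}$ and (b) that it intertwines the unit isomorphisms $\widetilde{l},\widetilde{r}$ with $\widehat{l},\widehat{r}$. Part (b) is immediate because on the unit $(H,\b)$ the comodule maps coincide with $\D$, so $\xi$ reduces to a composite of multiplications and counits that collapse via $S(h_{1})h_{2}=\v(h)1$. Part (a) is a standard hexagon computation: I would expand both ways around
\begin{align*}
\xymatrix{
(M\o_{H}N)\o_{H}P \ar[r]^{\widetilde{a}} \ar[d]_{\xi\o_{H}P\,\circ\,\xi}
& M\o_{H}(N\o_{H}P) \ar[d]^{M\o_{H}\xi\,\circ\,\xi} \\
(M\B_{H}N)\B_{H}P \ar[r]_{\widehat{a}} & M\B_{H}(N\B_{H}P)
}
\end{align*}
in $M\o V\o W\o V'$ after passing through the model $M=H\o V$ etc., where both composites reduce to the same reassociation by the associativity of $H$ and the Hom-coassociativity of $\D$.

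The main obstacle is Step 2, landing in the cotensor product: the formula for $\xi$ mixes one left-comultiplication on $n$ with one right-comultiplication on $m$, while the cotensor condition demands an equality between $m_{(0)(0)}\o m_{(0)(1)}\o\cdots$ and $\cdots\o n_{[-1]1}\o n_{[-1]2}\o n_{[0]}$, so the verification requires both Hom-coassociativity for $M$ as an $H$-bicomodule and both-sided Hopf module compatibilities to move the $H$-actions past the coactions. Once Step 2 is in hand the other steps are mechanical Hom-algebra bookkeeping analogous to the proofs of Proposition~\ref{P2.6} and Theorem~\ref{T1}.
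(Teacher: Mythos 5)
Your proposal is correct and takes essentially the same route as the paper: the corollary is deduced from Theorem 4.5 by composing the monoidal equivalences $\!^{coH}(-)$ and $H\o(-)$, and $\xi$ is identified on objects of the form $M=H\o V$, $N=H\o W$ as the composite $\d^{-1}\circ\vp$ of $\vp$ with its cotensor dual, exactly as you describe. Your additional Steps 1--4 and the direct coherence checks are logically redundant---well-definedness, invertibility, (co)linearity and monoidality all follow formally once $\xi$ is recognized as the structure map of a composite of monoidal equivalences---so the paper omits them, but carrying them out would do no harm.
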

\begin{proof}
The identity $\mathfrak{ID}$ is isomorphic to the composition
$$
\xymatrix@C=0.5cm{
 (~\!^{H}_{H}\mathfrak{M}^{H}_{H},\B_{H}) \ar[rr]^-{\!^{coH}(-)} &&  (\mathcal{YD}^{H}_{H},\o) \ar[rr]^-{H\o (-)} &&  (~\!^{H}_{H}\mathfrak{M}^{H}_{H},\o_{H})  }
  $$
of two monoidal equivalences. We only need to prove that the induced structure of monoidal functor on the identity has the form one have claimed. It is sufficient to consider the case $M=H\o V$ and $N=H\o W$ with $V, W\in \mathcal{YD}^{H}_{H}$. Then $\xi$ is the composition
$$
\xymatrix@C=0.5cm{
(H\o V)\o_{H} (H\o W)\ar[r]^-{\vp} &  (H\o V\o W) \ar[r]^-{\d^{-1}} & (H\o V)\B_{H} (H\o W)  },
  $$
where $\d^{-1}$ is dual to $\vp$ and is defined by $\d^{-1}(g\o v\o w)=(\b^{-1}(g)\o \z_{V}^{-1}(v))_{(0)}\o (\b^{-1}(g)\o \z_{V}^{-1}(v))_{(1)}\o w$. Thus we have
\begin{align*}
\d^{-1}\vp&(h\o v\o_{H}g\o w)=\d^{-1}[(\b^{-1}(g)\o \z^{-1}(v))\c \b^{-1}(h)\o w]
\\&=\d^{-1}[\b^{-1}(g)\b^{-2}(h_{1})\o \z^{-1}(v)\tl \b^{-2}(h_{2})\o w]
\\&=\b^{-2}(g_{1})\b^{-3}(h_{11})\o \z^{-1}[\z^{-1}(v)\tl \b^{-2}(h_{2})]_{(0)}
\\&\quad\o \b^{-2}([\b^{-1}(g_{2})\b^{-2}(h_{12})][\z^{-1}(v)\tl \b^{-2}(h_{2})]_{(1)})\o w
\\&=\b^{-2}(g_{1})\b^{-3}(h_{11})\o \z^{-1}[\z^{-1}(v)\tl \b^{-2}(h_{2})]_{(0)}
\\&\quad\o \b^{-2}(g_{2})\b^{-3}[\b^{-1}(h_{12})[\z^{-1}(v)\tl \b^{-2}(h_{2})]_{(1)}]\o w
\\&=\b^{-2}(g_{1})\b^{-2}(h_{1})\o \z^{-1}[\z^{-1}(v)\tl \b^{-3}(h_{22})]_{(0)}
\\&\quad\o \b^{-2}(g_{2})\b^{-3}[\b^{-1}(h_{21})[\z^{-1}(v)\tl \b^{-3}(h_{22})]_{(1)}]\o w
\\&=\b^{-2}(g_{1})\b^{-2}(h_{1})\o \z^{-1}[\z^{-1}(v_{(0)})\tl \b^{-2}(h_{21})]\o \b^{-2}(g_{2})\b^{-3}[v_{(1)} \b^{-1}(h_{22})]\o w
\\&=\b^{-2}(g_{1})\b^{-3}(h_{11})\o [\z^{-2}(v_{(0)})\tl \b^{-3}(h_{12})]\o \b^{-2}(g_{2})\b^{-3}[v_{(1)} h_{2}]\o w
\\&=[\b^{-2}(g_{1})\o \z^{-2}(v_{(0)})]\c \b^{-2}(h_{1})\o \b^{-3}(g_{2})\b^{-3}(v_{(1)}) \b^{-2}(h_{2})\o w
\\&=[\b^{-2}(g_{1})\o \z^{-2}(v_{(0)})]\c \b^{-2}(h_{1})\o \b^{-3}(g_{2})\b^{-3}(v_{(1)}) \c[\b^{-2}(h_{2})\o \z^{-1}(w)]
\\&=[\b^{-2}(g)\o \z^{-2}(v)]_{(0)}\c [\b^{-2}(h)\o \z^{-2}(w)]_{[-1]}
\\&\quad\o [\b^{-2}(g)\o \z^{-2}(v)]_{(1)} \c[\b^{-2}(h)\o \z^{-2}(w)]_{[0]}
\\&=\z_{M}^{-2}(m_{(0)})\c \b^{-2}(n_{[-1]})\o \b^{-2}(m_{(1)})\c \z_{N}^{-2}(n_{[0]}).
\end{align*}
The coherence condition on $\xi$ is commutativity of the diagram
$$\xymatrix{
  M\o_{H} N\o_{H} P \ar[d]_{M\o \xi} \ar[r]^{\xi\o P}  & (M\B_{H} N)\o_{H} P \ar[d]^{\xi}  \\
  M\o_{H} (N\B_{H} P)  \ar[r]_{\xi}   & M\B_{H} N\B_{H} P.             }
                $$

This completes the proof.
\end{proof}
In the following, we will construct a braiding structure on the monoidal category $(~\!^{H}_{H}\mathfrak{M}^{H}_{H},\o_{H})$, which is an important result in this section. In the same manner, we can construct a braiding structure on the monoidal category $(~\!^{H}_{H}\mathfrak{M}^{H}_{H},\B_{H})$.
\begin{theorem}\label{T2}
Let $(H,\b)$ be a Hom-Hopf algebra such that $\b$ and the antipode $S$ are bijective. Then $(~\!^{H}_{H}\mathfrak{M}^{H}_{H},\o_{H})$ is a braided monoidal category with a braiding
\begin{align*}
\widetilde{\si}:M\o_{H} N&\ra N\o_{H} M
\\m\o n&\m [\b^{-4}(m_{[-1]1})\c \z_{N}^{-3}(n_{(0)})]\c S\b^{-3}(n_{(1)1})
\\&\qquad\quad \o S\b^{-3}(m_{[-1]2})\c[\z_{M}^{-3}(m_{[0]})\c \b^{-4}(n_{(1)2})],
\end{align*}
and the inverse
\begin{align*}
\widetilde{\si}^{-1} (n\o m)=[\b^{-4}(n_{(1)2})\c &\z^{-3}_{M}(m_{[0]})]\c S^{-1}\b^{-3}(m_{[-1]2})
\\&\o S^{-1}\b^{-3}(n_{(1)1})\c [\z^{-3}_{N}(n_{(0)})\c \b^{-4}(m_{[-1]1})],
\end{align*}
for any $m\in M$ and $n\in N$.
\end{theorem}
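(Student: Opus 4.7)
The strategy is to transport the braiding on $\mathcal{YD}^{H}_{H}$ obtained in Theorem \ref{T3.5} along the monoidal equivalence $\Phi:\mathcal{YD}^{H}_{H}\to(\!^{H}_{H}\mathfrak{M}^{H}_{H},\o_{H})$, $V\mapsto H\o V$, established in the category equivalence theorem above. Because a monoidal equivalence transports a braiding on the source uniquely to the target, once Theorem \ref{T3.5} is applied the category $(\!^{H}_{H}\mathfrak{M}^{H}_{H},\o_{H})$ is automatically braided, and the naturality, the hexagon axioms $(\ref{e1.1})$ and $(\ref{e1.2})$, and the fact that $\widetilde{\si}$ is a morphism of four-angle Hopf modules (left and right $H$-linear, left and right $H$-colinear) follow formally. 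All the substantive work therefore consists in identifying the transported braiding with the explicit formula written for $\widetilde{\si}$.

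To carry out the identification I would proceed in three steps. First, I would record the explicit Hom-analogue of the splitting $M\cong H\o\!^{coH}M$ underlying the quasi-inverse $\!^{coH}(-)$: for $m\in M$, the image in $H\o\!^{coH}M$ is, up to the correct powers of $\b^{-1}$, of the form $m_{[-1]}\o\bigl(S(m_{[0][-1]})\c m_{[0][0]}\bigr)$. It is exactly the antipode $S$ occurring in this splitting that produces the factors $S\b^{-3}(m_{[-1]2})$ and $S\b^{-3}(n_{(1)1})$ appearing in the claimed formula. Second, using the monoidal structure morphism $\vp:(H\o V)\o_{H}(H\o W)\to H\o V\o W$ from the proof of the equivalence, I would reduce $\widetilde{\si}_{M,N}$ on $M\o_{H}N$ to applying $\mathrm{id}_{H}\o c_{\!^{coH}M,\!^{coH}N}$, where $c(v\o w)=\z_{W}^{-1}(w_{(0)})\o\z_{V}^{-1}(v)\tl\b^{-2}(w_{(1)})$ is the Yetter-Drinfel'd braiding. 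Third, I would chase a generic element $m\o n$ through the composite $\vp\circ(\mathrm{id}\o c)\circ\vp^{-1}$, match the output against the stated formula, and deduce the expression for $\widetilde{\si}^{-1}$ in the same way from the Yetter-Drinfel'd inverse $c^{-1}$.

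The main obstacle is the bookkeeping of the Hom-endomorphism $\b$ throughout this transport. Each stage (splitting $M$, passing through $\vp$, applying $c$, passing back through $\vp^{-1}$, and re-multiplying by $H$) introduces its own powers of $\b^{\pm1}$, and matching the total with the exponents $\b^{-3}$ and $\b^{-4}$ in the stated expression requires systematic use of Hom-associativity $\b(a)(bc)=(ab)\b(c)$ and of the compatibility $\D\circ\b=(\b\o\b)\circ\D$. Verifying directly that $\widetilde{\si}\circ\widetilde{\si}^{-1}=\mathrm{id}$ reduces, after cancellation of the $S$ factors via the antipode axiom, to an instance of the Yetter-Drinfel'd relation $(\ref{e3.1})$; together with the check that $\widetilde{\si}(m\o n)$ satisfies the balancing condition defining $N\o_{H}M$, this is the only piece of the plan that is not purely formal, and it is where the care with Hom-corrections is essential.
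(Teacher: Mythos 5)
Your proposal is correct and takes essentially the same approach as the paper: the paper likewise obtains the braided structure by transporting the Yetter--Drinfel'd braiding $c_{V,W}$ of Theorem \ref{T3.5} along the monoidal equivalence $V\mapsto H\o V$, first verifying the balancing condition $\widetilde{\si}(m\c h\o_{H}\z_{N}(n))=\widetilde{\si}(\z_{M}(m)\o_{H}h\c n)$ and then reducing to $M=H\o V$, $N=H\o W$, where the stated formula is confirmed by chasing an element through $\vp^{-1}\circ(\mathrm{id}_{H}\o c_{V,W})\circ\vp$ with careful bookkeeping of the powers of $\b$, exactly as you outline. The only slip is notational: the composite should be written $\vp^{-1}\circ(\mathrm{id}\o c)\circ\vp$ (as in your verbal description and in the paper's commutative diagram), not $\vp\circ(\mathrm{id}\o c)\circ\vp^{-1}$.
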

\begin{proof}
By Theorem \ref{T1}, it remains to check that the braiding $\widetilde{\si}$ induced in $\!^{H}_{H}\mathfrak{M}^{H}_{H}$ via the monoidal equivalent with $\mathcal{YD}^{H}_{H}$ has the stated form.

We first check that the linear map $\widetilde{\si}$ is well defined. For any $h\in H$, $m\in M$ and $n\in N$, we have
\begin{align*}
\widetilde{\si}(m\c h\o_{H} \z_{N}(n))&=[\b^{-4}((m\c h)_{[-1]1})\c \z_{N}^{-2}(n_{(0)})]\c S\b^{-2}(n_{(1)1})
\\&\quad \o_{H} S\b^{-3}((m\c h)_{[-1]2})\c[\z_{M}^{-3}((m\c h)_{[0]})\c \b^{-3}(n_{(1)2})]
\\&=[\b^{-4}(m_{[-1]1} h_{11})\c \z_{N}^{-2}(n_{(0)})]\c S\b^{-2}(n_{(1)1})
\\&\quad \o_{H} S\b^{-3}(m_{[-1]2} h_{12})\c[\z_{M}^{-3}((m_{[0]}\c h_{2}))\c \b^{-3}(n_{(1)2})]
\\&=[\b^{-4}(m_{[-1]1} h_{11})\c \z_{N}^{-2}(n_{(0)})]\c S\b^{-2}(n_{(1)1})
\\&\quad\o_{H} S\b^{-2}(h_{12})\c [S\b^{-3}(m_{[-1]2})\c[\z_{M}^{-4}(m_{[0]}\c h_{2})\c \b^{-4}(n_{(1)2})]]
\\&=[[\b^{-5}(m_{[-1]1} h_{11})\c \z_{N}^{-3}(n_{(0)})]\c S\b^{-3}(n_{(1)1})]\c S\b^{-2}(h_{12})
\\&\quad\o_{H} [S\b^{-2}(m_{[-1]2})\c[\z_{M}^{-3}(m_{[0]}\c h_{2})\c \b^{-3}(n_{(1)2})]]
\\&=[\b^{-4}(m_{[-1]1} h_{11})\c \z_{N}^{-2}(n_{(0)})]\c S\b^{-3}(n_{(1)1})S\b^{-3}(h_{12})
\\&\quad\o_{H} [S\b^{-2}(m_{[-1]2})\c[\z_{M}^{-3}(m_{[0]}\c h_{2})\c \b^{-3}(n_{(1)2})]]
\\&=[\b^{-3}(m_{[-1]1})\c [\b^{-4}(h_{11})\c \z_{N}^{-3}(n_{(0)})]]\c S\b^{-3}(h_{12}n_{(1)1})
\\&\quad\o_{H} S\b^{-2}(m_{[-1]2})\c[\z_{M}^{-2}(m_{[0]})\c \b^{-3}(h_{2})\b^{-4}(n_{(1)2})]
\\&=[\b^{-3}(m_{[-1]1})\c \z_{N}^{-3}(h_{1}\c n_{(0)})]\c S\b^{-3}(h_{21}n_{(1)1})
\\&\quad\o_{H} S\b^{-2}(m_{[-1]2})\c[\z_{M}^{-2}(m_{[0]})\c \b^{-4}(h_{22} n_{(1)2})]
\\&=[\b^{-3}(m_{[-1]1})\c \z_{N}^{-3}((h\c n)_{(0)})]\c S\b^{-3}((h\c n)_{(1)1})
\\&\quad\o_{H} S\b^{-2}(m_{[-1]2})\c[\z_{M}^{-2}(m_{[0]})\c \b^{-4}((h\c n)_{(1)2})]
\\&=\widetilde{\si}(\z_{M}(m) \B_{H}h\c n).
\end{align*}

Next, it is sufficient to consider the case of four-angle Hopf modules $M=H\o V$ and $N=H\o W$ with $(V,\z_{V}), (W, \z_{W})\in \mathcal{YD}^{H}_{H}$. In this case the $\widetilde{\si}$ is defined by the commutative diagram
$$\xymatrix{
  (H\o V)\o_{H}(H\o W) \ar[d]_{\widetilde{\si}} \ar[r]^-{\vp}      &  (H\o V\o W)\ar[d]^{H\o c_{V,W}}  \\
  (H\o W)\o_{H}(H\o V)  \ar[r]_-{\vp}               & H\o W\o V           }
  $$
where the $c_{V,W}$ denotes the braiding in Theorem \ref{T3.5}. For any $g, h\in H$, $v\in V$ and $w\in W$, we have
\begin{align*}
\widetilde{\si}(g&\o v\o_{H} h\o w)=\vp^{-1}(id\o \si)\vp(g\o v\o_{H} h\o w)
\\&=\vp^{-1}(id\o \si)(\b^{-1}(g)\b^{-2}(h_{1})\o \z_{V}^{-1}(v)\tl\b^{-2}(h_{2})\o w )
\\&=\vp^{-1}(\b^{-1}(g)\b^{-2}(h_{1})\o \z_{W}^{-1}(w_{(0)})\o \z_{V}^{-1}(\z_{V}^{-1}(v)\tl\b^{-2}(h_{2}))\tl \b^{-2}(w_{(1)}) )
\\&=\vp^{-1}(\b^{-1}(g)\b^{-2}(h_{1})\o \z_{W}^{-1}(w_{(0)})\o (\z_{V}^{-2}(v)\tl\b^{-3}(h_{2}))\tl \b^{-2}(w_{(1)}) )
\\&=[\b^{-1}(g )\b^{-2}(h_{1})\o\z_{W}^{-1} (w_{(0)})]\o_{H}1 \o\z_{V}^{-1}(v_{[0]})\tl \b^{-3}(h_{2})\b^{-3}(w_{(1) })
\\&=[\b^{-1}(g )\b^{-2}(h_{1})\o\z_{W}^{-1} (w_{(0)})]
\\&\quad \o_{H} S\b^{-4}( w_{(1)11})\b^{-4}(w_{(1)12})\o\z_{V}^{-1}(v_{[0]})\tl \b^{-3}(h_{2})\b^{-4}(w_{(1)2})
\\&=[\b^{-1}(g )\b^{-2}(h_{1})\o\z_{W}^{-1} (w_{(0)})]
\\&\quad \o_{H} S\b^{-4}( w_{(1)11})[[S\b^{-6}(h_{211})\b^{-6}(h_{212})]\b^{-5}(w_{(1)12})]\o\z_{V}^{-1}(v_{[0]})\tl \b^{-4}(h_{22}w_{(1)2})
\\&=[\b^{-1}(g )\b^{-2}(h_{1})\o\z_{W}^{-1} (w_{(0)})]
\\&\quad \o_{H} S\b^{-3}( w_{(1)1})[[S\b^{-5}(h_{21})\b^{-6}(h_{221}]\b^{-5}(w_{(1)21})]\o\z_{V}^{-1}(v_{[0]})\tl \b^{-5}(h_{222}w_{(1)22})
\\&=[\b^{-1}(g )\b^{-2}(h_{1})\o\z_{W}^{-1} (w_{(0)})]
\\&\quad \o_{H} S\b^{-4}(h_{21}w_{(1)1})\b^{-5}(h_{221}w_{(1)21})\o\z_{V}^{-1}(v_{[0]})\tl \b^{-5}(h_{222}w_{(1)22})
\\&=[\b^{-1}(g )\b^{-2}(h_{1})\o\z_{W}^{-1} (w_{(0)})]
\\&\quad \o_{H} S\b^{-4}(h_{21}w_{(1)1})\c [\b^{-5}(h_{221}w_{(1)21})\o\z_{V}^{-2}(v_{[0]})\tl \b^{-6}(h_{222}w_{(1)22})]
\\&=[\b^{-2}(g )\b^{-3}(h_{1})\o\z_{W}^{-2} (w_{(0)})]\c S\b^{-4}(h_{21}w_{(1)1})
\\&\quad \o_{H} \b^{-4}(h_{221}w_{(1)21})\o\z_{V}^{-1}(v_{[0]})\tl \b^{-5}(h_{222}w_{(1)22})
\\&=[\b^{-3}(g_{ 1})\b^{-3}(h_{1})\o\z_{W}^{-2} (w_{(0)})]\c S\b^{-4}(h_{21}w_{(1)1})
\\&\quad \o_{H} [S\b^{-4}(g_{21})\b^{-4}(g_{22})]\b^{-5}(h_{221}w_{(1)21})\o\z_{V}^{-1}(v_{[0]})\tl \b^{-5}(h_{222}w_{(1)22})
\\&=[\b^{-4}(g_{ 11})\b^{-3}(h_{1})\o\z_{W}^{-2} (w_{(0)})]\c S\b^{-4}(h_{21}w_{(1)1})
\\&\quad \o_{H} [S\b^{-4}(g_{12})\b^{-3}(g_{2})]\b^{-5}(h_{221}w_{(1)21})\o\z_{V}^{-1}(v_{[0]})\tl \b^{-5}(h_{222}w_{(1)22})
\\&=[\b^{-4}(g_{ 11})\b^{-3}(h_{1})\o\z_{W}^{-2} (w_{(0)})]\c S\b^{-4}(h_{21}w_{(1)1})
\\&\quad \o_{H} S\b^{-3}(g_{12})[\b^{-3}(g_{2})\b^{-6}(h_{221}w_{(1)21})]\o\z_{V}^{-1}(v_{[0]})\tl \b^{-5}(h_{222}w_{(1)22})
\\&=[\b^{-4}(g_{ 11})\b^{-3}(h_{1})\o\z_{W}^{-2} (w_{(0)})]\c S\b^{-4}(h_{21}w_{(1)1})
\\&\quad \o_{H} S\b^{-3}(g_{12})\c [\b^{-3}(g_{2})\b^{-6}(h_{221}w_{(1)21})\o\z_{V}^{-2}(v_{[0]})\tl \b^{-6}(h_{222}w_{(1)22})]
\\&=[\b^{-4}(g_{ 11})\c (\b^{-3}(h_{1})\o\z_{W}^{-3} (w_{(0)}))]\c S\b^{-4}(h_{21}w_{(1)1})
\\&\quad \o_{H} S\b^{-3}(g_{12})\c[(\b^{-3}(g_{2})\o\z_{V}^{-2}(v_{[0]}))\c \b^{-5}(h_{22}w_{(1)2})]
\\&=[\b^{-4}(g_{ 11})\c (\b^{-3}\o\z_{W}^{-3})(h_{1}\o w_{(0)})]\c S\b^{-4}(h_{21}w_{(1)1})
\\&\quad \o_{H} S\b^{-3}(g_{12})\c[(\b^{-3}\o\z_{V}^{-3})(g_{2}\o \z_{V}(v_{[0]}))\c \b^{-5}(h_{22}w_{(1)2}]
\\&=[\b^{-4}((g\o v)_{[-1]1})\c (\b^{-3}\o\z_{W}^{-3})((h\o w)_{(0)})]\c S\b^{-3}((h\o w)_{(1)1})
\\&\quad \o_{H} S\b^{-3}((g\o v)_{[-1]2})\c[(\b^{-3}\o\z_{V}^{-3})((g\o v)_{[0]})\c \b^{-4}((h\o w)_{(1)2})].
\end{align*}

This completes the proof.
\end{proof}
As a dual of the Theorem \ref{T2}, we can consider the Hom-cotensor product $\B_{H}$. The structure of monoidal category $(~\!^{H}_{H}\mathfrak{M}^{H}_{H}, \B_{H})$ has been given as in Proposition \ref{P1}. Let $(M,\z_{M}), (N,\z_{N})\in \!^{H}_{H}\mathfrak{M}^{H}_{H}$. Define the following linear map, for any $m\o n\in M\B_{H}N$:
\begin{align*}
\widehat{\si}:M\B_{H} N&\ra N\B_{H} M
\\m\o n&\m \b^{-4}(m_{(0)[-1]})S\b^{-3}(n_{[-1]})\c \z_{N}^{-3}(n_{[0](0)})
\\&\qquad \quad\B \z_{M}^{-3}(m_{(0)[0]})\c S\b^{-3}(m_{(1)})\b^{-4}(n_{[0](1)}),
\end{align*}

Note that the $\widehat{\si}$ is bijective with inverse $\widehat{\si}^{-1}(n\o m)=\z^{-3}_{M}(m_{[0](0)})\tl S^{-1}\b^{-3}(m_{[-1]})$ $\b^{-4}(n_{(0)[-1]})\B \b^{-4}(m_{[0](1)})S^{-1}\b^{-3}(n_{(1)})\tr \z^{-3}_{N}(n_{(0)[0]})$
\begin{theorem}
Let $(H,\b)$ be a Hom-Hopf algebra such that $\b$ and the antipode $S$ are bijective. Then $(~\!^{H}_{H}\mathfrak{M}^{H}_{H},\B_{H})$ is a braided monoidal category with a braiding $\widehat{\si}$.
\end{theorem}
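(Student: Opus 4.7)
The plan is to imitate the proof of Theorem \ref{T2}, exploiting the monoidal equivalence between $(~\!^{H}_{H}\mathfrak{M}^{H}_{H},\B_{H})$ and $(\mathcal{YD}^{H}_{H},\o)$ established in Theorem \ref{T1}. Since $(\mathcal{YD}^{H}_{H},\o)$ carries the braiding $c_{V,W}$ of Theorem \ref{T3.5}, transporting $c_{V,W}$ across this equivalence automatically endows $(~\!^{H}_{H}\mathfrak{M}^{H}_{H},\B_{H})$ with a braiding; what must be shown is that this induced braiding coincides with the explicit map $\widehat{\si}$ displayed in the statement, and that its inverse has the form claimed.

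First I would verify that $\widehat{\si}$ is well defined, i.e.\ that $\widehat{\si}(m\o n)\in N\B_{H}M$ whenever $m\o n\in M\B_{H}N$. This is a direct verification of the Hom-cotensor relation (\ref{e2.5}) using the bicomodule axioms on $M$ and $N$, the antipode identity $S(h_{1})h_{2}=\v(h)1_{H}$, and Hom-coassociativity, in complete analogy with the well-definedness check at the opening of the proof of Theorem \ref{T2}. A parallel computation checks that the stated inverse indeed lands in $M\B_{H}N$ and that $\widehat{\si}\circ\widehat{\si}^{-1}=id$, $\widehat{\si}^{-1}\circ\widehat{\si}=id$, so that $\widehat{\si}$ is a bijection.

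By Theorem \ref{T1}(3) every four-angle Hopf module is isomorphic to $H\o V$ for a unique $V\in \mathcal{YD}^{H}_{H}$, so it suffices to establish the formula on objects $M=H\o V$, $N=H\o W$ with $(V,\z_{V}),(W,\z_{W})\in \mathcal{YD}^{H}_{H}$. On such objects the transported braiding is, by construction, the outer route of the commutative square
$$
\xymatrix{
 (H\o V)\B_{H}(H\o W) \ar[d]_{\widehat{\si}} \ar[r]^-{\d^{-1}}  &  H\o V\o W \ar[d]^{H\o c_{V,W}}   \\
 (H\o W)\B_{H}(H\o V) \ar[r]_-{\d^{-1}}            &  H\o W\o V     }
$$
where $\d^{-1}$ is the isomorphism used in the corollary above. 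Substituting the explicit formula for $c_{V,W}$ from Theorem \ref{T3.5} and for $\d^{-1}$, then re-expressing the result in terms of the module and comodule data of $M$ and $N$ via $m_{[-1]}$, $m_{(0)}$, $n_{(1)}$, $n_{[0]}$, etc., reproduces precisely the map $\widehat{\si}$ in the statement. Naturality of $\widehat{\si}$ in both variables and the hexagon identities (\ref{e1.1})--(\ref{e1.2}) then follow at once from the corresponding properties of $c_{V,W}$ together with the fact that Theorem \ref{T1} is a \emph{monoidal} equivalence.

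The main obstacle is purely computational bookkeeping: the explicit reduction of $\d^{-1}\circ(H\o c_{V,W})\circ \d$ is a long chain of equalities using Hom-associativity, Hom-coassociativity, the antipode axiom and the Yetter--Drinfel'd compatibility (\ref{e3.1}), each step of which introduces a twist by a power of $\b$. One must keep track of these twists carefully so that they assemble into the specific exponents $\b^{-3}$, $\b^{-4}$ and the applications of $\z_{M}^{-3}$, $\z_{N}^{-3}$ appearing in $\widehat{\si}$, precisely mirroring the analogous bookkeeping carried out in the proof of Theorem \ref{T2} but on the cotensor side.
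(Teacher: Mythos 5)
Your proposal is correct and takes essentially the same route as the paper, whose own proof of this theorem is just the remark that it is ``similar to Theorem \ref{T2}'': transport the braiding $c_{V,W}$ of Theorem \ref{T3.5} across the monoidal equivalence of Theorem \ref{T1} for the $\B_{H}$-structure, check well-definedness into the cotensor subspace, and identify the transported map with $\widehat{\si}$ on $M=H\o V$, $N=H\o W$ via the composite $\d^{-1}\circ(H\o c_{V,W})\circ\d$, exactly mirroring the $\o_{H}$-side computation. One cosmetic slip: the horizontal arrows in your square point from the cotensor products to $H\o V\o W$ and so should be labelled $\d$ rather than $\d^{-1}$ (as defined in the corollary, $\d^{-1}$ maps $H\o V\o W$ into $(H\o V)\B_{H}(H\o W)$), but your prose formula $\d^{-1}\circ(H\o c_{V,W})\circ\d$ already has the composition the right way around.
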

The proof is similar to Theorem \ref{T2}.

\section*{Acknowledgements}The authors would like to thank the reviewer for helpful comments.

\section*{Funding}
This research is partially supported by the Natural Science Foundation of the Jiangsu Higher Education Institutions of China (No. 22KJB110019)
and the Scientific Research Foundation of Nanjing Institute of Technology (No. YKJ202219).

\end{document}